\newcommand{\1}{\mathds{1}}
\newcommand{\Q}{\mathbb{Q}}
\newcommand{\R}{\mathbb{R}}
\newcommand{\N}{\mathbb{N}}
\newcommand{\B}{\mathrm{B}}
\newcommand{\Bbo}{\overline{\mathrm{B}}}
\newcommand{\8}{\infty}
\newcommand{\conv}{\mathrm{conv}}
\newcommand{\Po}{\mathcal{P}}
\newcounter{dummy} \numberwithin{dummy}{section}
\newtheorem{theorem}[dummy]{Theorem}
\newtheorem{lemma}[dummy]{Lemma}
\newtheorem{proposition}[dummy]{Proposition}
\theoremstyle{remark}
\newtheorem{remark}[dummy]{Remark}
\newtheorem{example}[dummy]{Example}
\DeclareMathOperator*{\bigplus}{\scalerel*{+}{\sum}}
\begin{document}

\title{Locally hulled topologies}
\author{Eugene Bilokopytov\footnote{Email address bilokopy@ualberta.ca, erz888@gmail.com.}}
\maketitle

\begin{abstract}
We present a general result about generating group topologies by pseudo-norms. Namely, we show that if a topology has a base of sets which are closed in a certain sense, then it can be generated by a collection of pseudo-norms such that the balls in these pseudo-norms are also closed in the same sense. The examples include linear and locally convex topologies on vector spaces, locally solid and Fatou topologies on vector lattices and Fr\'echet-Nikod\'ym topologies on Boolean algebras.

\emph{Keywords:} Hull structures, group topologies, Fatou topologies;

MSC2020 22A20, 46A40, 52A01
\end{abstract}

\section{Introduction}

This note is a side branch of the ongoing project on locally solid convergence structures on vector lattices (see e.g. \cite{erz} and \cite{ectv}), and is a precursor to the work \cite{erzh}. In the study of locally solid topologies on a vector lattice (see e.g. \cite{ab0}, \cite{fremlin}) instead of working with topologies directly one can look at solid pseudo-norms which generate these topologies. Similarly, the study of locally convex topologies can be partially reduced to the semi-norms on a vector space. An analogous situation occures in the realm of Boolean algebras, where one can study Fr\'echet-Nikod\'ym topologies (see \cite{weber}), or the submeasures. In all these cases, we rely on the fact that the corresponding topology is completely determined by a family of functionals of a certain type. This paper is dedicated to a general result which would include the aforementioned ones as examples.\medskip

Namely, we will consider group topologies on commutative groups which have local bases at $0$ consisting of sets which are closed in a certain sense. The ``certain sense'' part is formalized via the notion of a hull structure. This concept exists in the literature under different names (see e.g. \cite{dp} and \cite{erne}), and is a generalization of the notion of a topology. Convex sets, solid sets and many others are examples of hull structures. We call the corresponding topologies locally hulled, keeping with the terminology of ``locally convex / solid''. We prove (Theorem \ref{generated}) that for a wide class of hull structures, any locally hulled group topology is generated by a collection of pseudo-norms all of whose balls are hulled. After that we explain how to deduce some classical facts in more specific contexts.\medskip

The novelty of the approach developed in the paper lies in its high level of generality and systematic use of the hull structures. The exposition is mostly self contained except for the examples.

\section{Hull structures}\label{hull}

A \emph{hull structure} on a set $X$ is a collection $\mathcal{Q}$ of subsets of $X$ which is closed with respect to arbitrary intersections and contains $X$. It generates a \emph{hull operator} from $\Po\left(X\right)$ into $\mathcal{Q}$ defined by $A^{\mathcal{Q}}:=\bigcap\left\{Q\in\mathcal{Q},~Q\supset A\right\}\in\mathcal{Q}$. Examples include closed sets in a topological space, convex sets in a vector space, closed convex sets in a topological vector space, subgroups of a group, ideals in a ring, order closed sets in a vector lattice (see Example \ref{oclosed}) etc.

Additionally, let us mention two examples of hull structures on posets. If $\le$ is a partial order on $X$, call $Q\subset X$ \emph{full} if $p,q\in Q$, $p\le r\le q$ $\Rightarrow$ $r\in Q$, and a \emph{lower set} if $p\le q\in Q$ $\Rightarrow$ $p\in Q$. Clearly, every lower set is full, and both full and lower sets form hull structures on $X$.

Let $\mathcal{Q},\mathcal{R}$ be hull structures on $X$ and $Y$. A map $\varphi:X\to Y$ is called \emph{continuous} if $\varphi^{-1}\left(R\right)\in\mathcal{Q}$, for every $R\in\mathcal{R}$. In particular, a function $f:X\to\left[-\8,+\8\right]$ is \emph{lower $\mathcal{Q}$-continuous} if it is a continuous map from $\left(X,\mathcal{Q}\right)$ into $\left(\left[-\8,+\8\right],\mathcal{R}\right)$, where $\mathcal{R}$ is the hull structure of lower sets. It is easy to see that $f$ is lower $\mathcal{Q}$-continuous iff $f^{-1}\left[-\8,r\right]\in \mathcal{Q}$, for every $r\in\R$. The collection of lower $\mathcal{Q}$-continuous functions is stable with respect to pointwise supremum, and non-negative scalar multiplication. If $f$ is lower $\mathcal{Q}$-continuous, then $f\wedge \1$ and $f+\1$ are lower $\mathcal{Q}$-continuous. Every function is lower $\Po\left(X\right)$-continuous.\medskip

We say that a hull structure $\mathcal{R}$ is \emph{$1$-algebraic}, if it is also closed with respect to arbitrary unions. It is easy to check that this property is equivalent to the fact that $A^{\mathcal{Q}}=\bigcup\limits_{x\in A}\left\{x\right\}^{\mathcal{Q}}$ (see also \cite[Theorem 7.14]{dp} for a similar result, which also justifies the term ``$1$-algebraic''). Apart from the hull operator it also generates the \emph{core operator} from $\Po\left(X\right)$ into $\mathcal{Q}$ defined by $A_{\mathcal{R}}:=\bigcup\left\{R\in\mathcal{R},~R\subset A\right\}$. Examples include $\Po\left(X\right)$ itself, balanced sets in a vector space, lower sets in a poset, solid sets in a vector lattice (see Example \ref{solid}) etc. Note that $\mathcal{R}$ generates a pre-order relation $\le_{\mathcal{R}}$ defined by $y\le_{\mathcal{R}}x$ if $y\in \left\{x\right\}^{\mathcal{R}}$; then $\mathcal{R}$ is precisely the hull structure of the lower sets with respect to this pre-order.

If $\mathcal{R}$ is $1$-algebraic, then $f$ is lower $\mathcal{R}$-continuous iff $y\in \left\{x\right\}^{\mathcal{R}}$ $\Rightarrow$ $f\left(y\right)\le f\left(x\right)$. Any function can be transformed into a lower $\mathcal{R}$-continuous one. Namely, for $f:X\to\left[-\8,+\8\right]$ define $f_{\mathcal{R}}\left(x\right):=\bigvee\limits_{y\in \left\{x\right\}^{\mathcal{R}}}f\left(y\right)$. Then, $f^{-1}_{\mathcal{R}}\left[-\8,r\right]=\left(f^{-1}\left[-\8,r\right]\right)_{\mathcal{R}}$, for every $r\in\R$.\medskip

In the following we select a ``base point'' $0_{X}\in X$. For a topology $\tau$ on $X$ let $\tau_{0}$ stand for the filter of $\tau$-neighborhoods of $0_{X}$. We call $\tau$ a $\mathcal{Q}$\emph{-topology} if it has a base at $0_{X}$ consisting of members of $\mathcal{Q}$, i.e. $\tau_{0}\cap \mathcal{Q}$ is a base for $\tau_{0}$.

\begin{proposition}\label{1ac}
Assume that $\mathcal{R}$ is a $1$-algebraic hull structure on $X$. A topology $\tau$ on $X$ is a $\mathcal{R}$-topology if and only if $x_{p}\xrightarrow[]{\tau}0_{X}$ $\Rightarrow$ $y_{p}\xrightarrow[]{\tau}0_{X}$, where $y_{p}\in\left\{x_{p}\right\}^{\mathcal{R}}$, for every $p\in P$.
\end{proposition}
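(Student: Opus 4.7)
The proof naturally splits into two directions, and it is straightforward. For the forward direction, I would assume $\tau$ is an $\mathcal{R}$-topology and fix a net $x_{p}\xrightarrow[]{\tau}0_{X}$ together with $y_{p}\in\left\{x_{p}\right\}^{\mathcal{R}}$. Given $U\in\tau_{0}$, choose $R\in\tau_{0}\cap\mathcal{R}$ with $R\subset U$. Eventually $x_{p}\in R$; since $R\in\mathcal{R}$ itself, the defining property of the hull operator forces $\left\{x_{p}\right\}^{\mathcal{R}}\subset R$ as soon as $x_{p}\in R$. Hence $y_{p}\in R\subset U$ eventually, so $y_{p}\xrightarrow[]{\tau}0_{X}$. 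Note that this direction does not use $1$-algebraicity.

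For the converse, my plan is to construct the base via the core operator. Given $U\in\tau_{0}$, set $U_{\mathcal{R}}=\bigcup\left\{R\in\mathcal{R},~R\subset U\right\}$. Since $\mathcal{R}$ is $1$-algebraic, $U_{\mathcal{R}}\in\mathcal{R}$, and by construction $U_{\mathcal{R}}\subset U$. The key reformulation $U_{\mathcal{R}}=\left\{x\in X:~\left\{x\right\}^{\mathcal{R}}\subset U\right\}$ is immediate from $A^{\mathcal{R}}=\bigcup_{x\in A}\left\{x\right\}^{\mathcal{R}}$: indeed, $x\in U_{\mathcal{R}}$ iff $x$ lies in some $R\in\mathcal{R}$ with $R\subset U$, iff $\left\{x\right\}^{\mathcal{R}}\subset U$. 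Once I verify $U_{\mathcal{R}}\in\tau_{0}$, the family $\left\{U_{\mathcal{R}}:~U\in\tau_{0}\right\}$ furnishes the required base at $0_{X}$ inside $\mathcal{R}$.

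To show $U_{\mathcal{R}}\in\tau_{0}$, I would argue by contradiction. Suppose no neighborhood $V\in\tau_{0}$ is contained in $U_{\mathcal{R}}$. Direct the filter $\tau_{0}$ by reverse inclusion and, for each $V\in\tau_{0}$, pick $x_{V}\in V\setminus U_{\mathcal{R}}$. Then $x_{V}\xrightarrow[]{\tau}0_{X}$, while the reformulation above gives $\left\{x_{V}\right\}^{\mathcal{R}}\not\subset U$, so one may select $y_{V}\in\left\{x_{V}\right\}^{\mathcal{R}}\setminus U$. Applying the hypothesis, $y_{V}\xrightarrow[]{\tau}0_{X}$; yet $y_{V}\notin U$ for every $V\in\tau_{0}$, contradicting $U\in\tau_{0}$.

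The only genuinely substantive step is the reformulation of $U_{\mathcal{R}}$, which is precisely where $1$-algebraicity does the work: it ensures that if $x_{V}\notin U_{\mathcal{R}}$ then a single offending point $y_{V}\in\left\{x_{V}\right\}^{\mathcal{R}}$ fails to lie in $U$. Without this, the core could shed witnesses and the net argument would break down. Everything else is routine manipulation of filter bases and hull operators.
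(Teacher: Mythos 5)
Your proof is correct and follows essentially the same route as the paper's: the necessity argument is identical, and the sufficiency argument is the same contradiction via a net indexed by $\tau_{0}$, the only cosmetic difference being that you work with the core $U_{\mathcal{R}}$ of the target neighborhood (thereby also exhibiting an explicit $\mathcal{R}$-base, which is essentially part (i) of Proposition \ref{1a}), whereas the paper works with the hulls $V^{\mathcal{R}}$ of the small neighborhoods $V$. In both versions $1$-algebraicity enters at exactly the same point, namely to pass between a point $x_{V}\in V$ and a witness $y_{V}\in\left\{x_{V}\right\}^{\mathcal{R}}\setminus U$.
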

\begin{proof}
Necessity: Let $U\in\tau_{0}\cap \mathcal{R}$. There is $p_{0}\in P$ such that $x_{p}\in U$, for every $p\ge p_{0}$; as $U\in\mathcal{R}$, it follows that $y_{p}\in \left\{x_{p}\right\}^{\mathcal{R}}\subset U$. Since $U$ was chosen arbitrarily we conclude that $y_{p}\xrightarrow[]{\tau}0_{X}$.\medskip

Sufficiency: Assume that $\tau$ is not a $\mathcal{R}$-topology so that there is $U\in\tau_{0}$ such that for every $V\in\tau_{0}$ we have $V^{\mathcal{R}}\not\subset U$. therefore, for every $V\in\tau_{0}$ there is $y_{V}\in U\backslash V^{\mathcal{R}}$; as $\mathcal{R}$ is $1$-algebraic, there is $x_{V}\in V$ with $y_{V}\in \left\{x_{V}\right\}^{\mathcal{R}}$. Then, $x_{V}\xrightarrow[V\in\tau_{0}]{\tau}0_{X}$, hence $y_{V}\xrightarrow[V\in\tau_{0}]{\tau}0_{X}$, which contradicts $y_{V}\notin U$, for every $V\in\tau_{0}$.
\end{proof}

\begin{proposition}\label{1a}
Assume that $\mathcal{R}$ is $1$-algebraic and $\tau$ is a $\mathcal{R}$-topology. Then:
\item[(i)] If $U\in\tau_{0}$, then $U_{\mathcal{R}}\in\tau_{0}$.
\item[(ii)] If $f:X\to\left[0,+\8\right]$ is $\tau$-continuous at $0_{X}$ and such that $f\left(0_{X}\right)=0$, then the same is true for $f_{\mathcal{R}}$.
\end{proposition}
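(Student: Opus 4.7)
For part (i), the plan is to exploit directly the definition of an $\mathcal{R}$-topology. Since $U\in\tau_{0}$ and $\tau_{0}\cap\mathcal{R}$ is a base for $\tau_{0}$, there is some $V\in\tau_{0}\cap\mathcal{R}$ with $V\subset U$. Because $V\in\mathcal{R}$ and $V\subset U$, the very definition of the core yields $V\subset U_{\mathcal{R}}$. Since $\tau_{0}$ is a filter, the superset $U_{\mathcal{R}}$ of $V$ lies in $\tau_{0}$. This is essentially immediate and requires no use of $1$-algebraicity beyond the existence of the core operator.

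For part (ii), my plan is to transfer a neighborhood of $0_{X}$ on which $f$ is small to one on which $f_{\mathcal{R}}$ is small, using the identity $f_{\mathcal{R}}^{-1}[-\infty,r]=(f^{-1}[-\infty,r])_{\mathcal{R}}$ from the paragraph preceding the proposition, together with (i). Fix $\varepsilon>0$. By continuity of $f$ at $0_{X}$ with $f(0_{X})=0$, there exists $U\in\tau_{0}$ with $f(U)\subset[0,\varepsilon/2]$; equivalently $U\subset f^{-1}[-\infty,\varepsilon/2]$. Monotonicity of the core operator gives
\[
U_{\mathcal{R}}\subset\bigl(f^{-1}[-\infty,\varepsilon/2]\bigr)_{\mathcal{R}}=f_{\mathcal{R}}^{-1}[-\infty,\varepsilon/2],
\]
so $f_{\mathcal{R}}(x)\le\varepsilon/2<\varepsilon$ for every $x\in U_{\mathcal{R}}$. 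By part (i), $U_{\mathcal{R}}\in\tau_{0}$, which is the required neighborhood.

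To complete (ii) it remains to note $f_{\mathcal{R}}(0_{X})=0$. Since $0_{X}\in U_{\mathcal{R}}$ for every $\varepsilon>0$, the inequality above forces $f_{\mathcal{R}}(0_{X})\le\varepsilon/2$ for all $\varepsilon>0$; combined with $f\ge 0$ (hence $f_{\mathcal{R}}\ge 0$), this gives $f_{\mathcal{R}}(0_{X})=0$.

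The only subtlety I anticipate is the need to use the closed lower interval $[-\infty,\varepsilon/2]$ rather than the open one $[-\infty,\varepsilon)$, since the identity $f_{\mathcal{R}}^{-1}[-\infty,r]=(f^{-1}[-\infty,r])_{\mathcal{R}}$ is stated only for closed lower intervals; this is handled by the standard trick of shrinking $\varepsilon$ to $\varepsilon/2$. Otherwise the argument is a direct unwinding of definitions, so I expect no deeper obstacle.
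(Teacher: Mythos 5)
Your proposal is correct and follows essentially the same route as the paper: part (i) is identical, and for part (ii) the paper simply takes $U:=f^{-1}\left[0,\varepsilon\right]$ so that $U_{\mathcal{R}}=f_{\mathcal{R}}^{-1}\left[0,\varepsilon\right]$ exactly, making your $\varepsilon/2$ shrinkage and appeal to monotonicity of the core unnecessary but harmless. Your explicit verification that $f_{\mathcal{R}}\left(0_{X}\right)=0$ is a small point the paper leaves implicit, and it is handled correctly.
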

\begin{proof}
(i): Since $\tau$ is a $\mathcal{R}$-topology, there is $V\in\tau_{0}\cap\mathcal{R}$ such that $V\subset U$. Then, by definition $V\subset U_{\mathcal{R}}$, which implies that $U_{\mathcal{R}}\in\tau_{0}$.\medskip

(ii): Let $\varepsilon>0$ and let $U:=f^{-1}\left[0,\varepsilon\right]$. Then, $U\in\tau_{0}$, and according to (i) $f^{-1}_{\mathcal{Q}}\left[0,\varepsilon\right]=U_{\mathcal{Q}}\in\tau_{0}$. As $\varepsilon$ was arbitrary, we conclude that $f_{\mathcal{R}}$ is $\tau$-continuous at $0_{X}$.
\end{proof}

Assume that $\mathcal{Q}$ and $\mathcal{R}$ are hull structures on $X$ such that $\mathcal{R}$ is $1$-algebraic. We say that $\mathcal{Q}$ is $\mathcal{R}$-\emph{enhancible} if $0_{X}\in Q_{\mathcal{R}}\in \mathcal{Q}$, for every $0_{X}\in Q\in\mathcal{Q}$; note that in fact $Q_{\mathcal{R}}\in \mathcal{Q}\cap \mathcal{R}$.

\begin{example}
If $\mathcal{R}\subset\mathcal{Q}$, then $\mathcal{Q}$ is $\mathcal{R}$-enhancible. For example, the hull structure of full sets on a poset with the smallest element $0_{X}$ is $\mathcal{L}$-enhancible, where $\mathcal{L}$ is the hull structure of lower sets.

Any hull structure on $X$ is $\Po\left(X\right)$-enhancible.

The hull structure of convex sets on a vector space is $\mathcal{B}$-enhancible, where $\mathcal{B}$ is the hull structure of balanced sets.

The hull structure of order closed sets on a vector lattice is $\mathcal{D}$-enhancible, where $\mathcal{D}$ is the hull structure of solid sets (see Example \ref{fatou}).
\qed\end{example}

\begin{proposition}\label{enhancible}
Assume that $\mathcal{Q}$ is $\mathcal{R}$-enhancible. Then:
\item[(i)] If $\tau$ is a $\mathcal{Q}$-topology and a $\mathcal{R}$-topology, then $\tau$ is a $\mathcal{Q}\cap\mathcal{R}$-topology.
\item[(ii)] If $\mathcal{S}$ is a $1$-algebraic hull structure on $X$ such that $\mathcal{Q}$ and $\mathcal{R}$ are $\mathcal{S}$-enhancible, then $\mathcal{Q}$ is $\mathcal{R}\cap\mathcal{S}$-enhancible and $Q_{\mathcal{R}\cap\mathcal{S}}=\left(Q_{\mathcal{R}}\right)_{\mathcal{S}}\in\mathcal{Q}$, for every $0_{X}\in Q\in\mathcal{Q}$.
\end{proposition}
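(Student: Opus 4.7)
The proof of both parts is essentially a direct unpacking of the definitions, with Proposition \ref{1a}(i) doing the heavy lifting in (i) and a double application of enhancibility in (ii).

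\textbf{Part (i).} The plan is to take an arbitrary $U\in\tau_{0}$ and produce an element of $\tau_{0}\cap\mathcal{Q}\cap\mathcal{R}$ contained in $U$. Since $\tau$ is a $\mathcal{Q}$-topology, pick $Q\in\tau_{0}\cap\mathcal{Q}$ with $Q\subset U$; since $0_{X}\in Q\in\mathcal{Q}$, $\mathcal{R}$-enhancibility gives $0_{X}\in Q_{\mathcal{R}}\in\mathcal{Q}\cap\mathcal{R}$. Because $\tau$ is also an $\mathcal{R}$-topology, Proposition \ref{1a}(i) yields $Q_{\mathcal{R}}\in\tau_{0}$, and $Q_{\mathcal{R}}\subset Q\subset U$ by definition of the core. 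This shows $\tau_{0}\cap(\mathcal{Q}\cap\mathcal{R})$ is a base for $\tau_{0}$.

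\textbf{Part (ii).} First I would observe that $\mathcal{R}\cap\mathcal{S}$ is again a $1$-algebraic hull structure, since both families are closed under arbitrary intersections and unions and both contain $X$. Next, given $0_{X}\in Q\in\mathcal{Q}$, I would apply enhancibility twice: $\mathcal{R}$-enhancibility of $\mathcal{Q}$ gives $0_{X}\in Q_{\mathcal{R}}\in\mathcal{Q}\cap\mathcal{R}$, and then $\mathcal{S}$-enhancibility of $\mathcal{Q}$ applied to $Q_{\mathcal{R}}$ gives $0_{X}\in(Q_{\mathcal{R}})_{\mathcal{S}}\in\mathcal{Q}\cap\mathcal{S}$. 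Similarly, $\mathcal{S}$-enhancibility of $\mathcal{R}$ applied to $Q_{\mathcal{R}}$ yields $(Q_{\mathcal{R}})_{\mathcal{S}}\in\mathcal{R}\cap\mathcal{S}$. Thus $(Q_{\mathcal{R}})_{\mathcal{S}}\in\mathcal{Q}\cap\mathcal{R}\cap\mathcal{S}$ and contains $0_{X}$.

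It remains to show $Q_{\mathcal{R}\cap\mathcal{S}}=(Q_{\mathcal{R}})_{\mathcal{S}}$, which will simultaneously establish that this set lies in $\mathcal{Q}$ (completing the enhancibility claim). For $\supset$, I would use that $(Q_{\mathcal{R}})_{\mathcal{S}}\in\mathcal{R}\cap\mathcal{S}$ and $(Q_{\mathcal{R}})_{\mathcal{S}}\subset Q_{\mathcal{R}}\subset Q$, so it is a subset of the union $Q_{\mathcal{R}\cap\mathcal{S}}$. For $\subset$, take any $T\in\mathcal{R}\cap\mathcal{S}$ with $T\subset Q$: then $T\subset Q_{\mathcal{R}}$ since $T\in\mathcal{R}$, and then $T\subset(Q_{\mathcal{R}})_{\mathcal{S}}$ since $T\in\mathcal{S}$. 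Taking the union over such $T$ yields the inclusion.

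I do not anticipate any real obstacle; the main subtlety is just remembering to feed each intermediate core set back into the correct enhancibility hypothesis, and noting that $(Q_{\mathcal{R}})_{\mathcal{S}}$ inherits membership in $\mathcal{R}$ through the $\mathcal{S}$-enhancibility of $\mathcal{R}$ itself, not of $\mathcal{Q}$.
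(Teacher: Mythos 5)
Your proof is correct and follows essentially the same route as the paper: part (i) is verbatim the paper's argument (core of a basic $\mathcal{Q}$-neighborhood via Proposition \ref{1a}(i)), and part (ii) is the paper's double application of enhancibility, with the identity $Q_{\mathcal{R}\cap\mathcal{S}}=(Q_{\mathcal{R}})_{\mathcal{S}}$ verified by the same two inclusions, merely unpacked a bit more explicitly than the paper's appeal to $Q_{\mathcal{R}\cap\mathcal{S}}\subset R_{\mathcal{R}\cap\mathcal{S}}\subset R_{\mathcal{S}}$.
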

\begin{proof}
(i): For every $U\in\tau_{0}$ there is $V\in\tau_{0}\cap\mathcal{Q}$ such that $V\subset U$. Then, by our assumption $V_{\mathcal{R}}\in\mathcal{Q}\cap\mathcal{R}$; by part (i) of Proposition \ref{1a} $V_{\mathcal{R}}\in\tau_{0}$, and by definition $V_{\mathcal{R}}\subset V\subset U$.\medskip

(ii): Let $0_{X}\in Q\in\mathcal{Q}$. Since $\mathcal{Q}$ is $\mathcal{R}$-enhancible, we have $0_{X}\in R:=Q_{\mathcal{R}}\supset Q_{\mathcal{R}\cap\mathcal{S}}$. Since $R\in\mathcal{Q}\cap \mathcal{R}$ and both $\mathcal{Q}$ and $\mathcal{R}$ are $\mathcal{S}$-enhancible, we have $0_{X}\in R_{\mathcal{S}}\in\mathcal{Q}\cap\mathcal{R}\cap \mathcal{S}$. On the other hand, $Q_{\mathcal{R}\cap\mathcal{S}}\subset R$ implies $Q_{\mathcal{R}\cap\mathcal{S}}\subset R_{\mathcal{R}\cap\mathcal{S}}\subset R_{\mathcal{S}}$, and so $Q_{\mathcal{R}\cap\mathcal{S}}=R_{\mathcal{S}}$.
\end{proof}

\begin{remark}\label{hn}
A situation similar to part (i) occurs when $\mathcal{Q}$ and $\mathcal{R}$ are such that $Q^{\mathcal{R}}\in \mathcal{Q}$, for every $0_{X}\in Q\in\mathcal{Q}$ (we do not require $\mathcal{R}$ to be $1$-algebraic). If in this case $\tau$ is a $\mathcal{Q}$-topology and a $\mathcal{R}$-topology, then $\tau$ is a $\mathcal{Q}\cap\mathcal{R}$-topology. Indeed, $\left\{U^{\mathcal{R}},~U\in\tau_{0}\cap\mathcal{Q}\right\}\subset \tau_{0}\cap\mathcal{Q}\cap\mathcal{R}$ is a basis for $\tau_{0}$: for every $W\in\tau_{0}$ there is $V\in \tau_{0}\cap\mathcal{R}$ and $U\in \tau_{0}\cap\mathcal{Q}$ such that $U\subset V\subset W$ implying $U^{\mathcal{R}}\subset V\subset W$. The hull structures in relationships like this are solid and convex sets in a vector lattice, convex and closed sets in a topological vector space etc.
\qed\end{remark}

\section{Additive topologies on monoids}\label{monoid}

Let $\left(G,+,0_{G}\right)$ be a commutative monoid, i.e. a semi-group with a unit. A \emph{pseudo-norm} on $G$ is a subadditive function $\rho:E\to \left[0,+\8\right]$ such that $\rho\left(0_{G}\right)=0$. Clearly, $\ker\rho$ and $\rho^{-1}\left(\R\right)$ are submonoids of $G$. A sum of two pseudo-norms is a pseudo-norm.

Let $\mathcal{Q}$ be a hull structure on $G$. We say that a pseudo-norm $\rho$ on $G$ is a \emph{$\mathcal{Q}$-pseudo-norm} if it is lower $\mathcal{Q}$-continuous, i.e. $\rho^{-1}\left[0,r\right]\in \mathcal{Q}$, for every $r\ge 0$. A pointwise supremum of any collection of $\mathcal{Q}$-pseudo-norms is a $\mathcal{Q}$-pseudo-norm; if $\rho$ is a $\mathcal{Q}$-pseudo-norm, then so is $\rho\wedge \1$, as well as $s\rho$, where $s\ge 0$.\medskip

A \emph{string} is a sequence $\left\{U_{n}\right\}_{n\in\N_{0}}$ of sets containing $0_{G}$ such that $U_{n}+U_{n}\subset U_{n-1}$, for every $n\in\N$. We say that a string $\left\{U_{n}\right\}_{n\in\N_{0}}$ is \emph{subordinate} to the string $\left\{V_{n}\right\}_{n\in\N_{0}}$, if $U_{n}\subset V_{n}$, for every $n\in\N_{0}$. If $\left\{U_{n}\right\}_{n\in\N_{0}}\subset\mathcal{Q}$, then we call it a $\mathcal{Q}$-\emph{string}. If $\rho$ is a pseudo-norm on $G$, then $\left\{\rho^{-1}\left[0,\frac{1}{2^{n}}\right]\right\}_{n\in\N_{0}}$ is a string; it is a $\mathcal{Q}$-string if $\rho$ is a $\mathcal{Q}$-pseudo-norm.

We call $\mathcal{Q}$ \emph{additive} if $Q,R\in\mathcal{Q}$ $\Rightarrow$ $Q+R\in\mathcal{Q}$. For $e\in G$ let $T_{e}:G\to G$ be the \emph{translation map} defined by $T_{e}g:=e+g$. We say that $\mathcal{Q}$ is \emph{translation-invariant} if every $T_{e}$ is a $\mathcal{Q}$-continuous map on $G$.

As was mentioned before, every pseudo-norm gives rise to a string. We will now show how a string gives rise to a pseudo-norm. The arguments for the first case are adapted from \cite{aek} and \cite{wael}; the idea for the proof of second case is inspired by \cite{fremlin}.

\begin{theorem}\label{strps}
Assume that $\mathcal{Q}$ is either additive or translation invariant and $\left\{U_{n}\right\}_{n\in\N_{0}}$ is a $\mathcal{Q}$-string in $G$. Then there is a $\left[0,1\right]$-valued $\mathcal{Q}$-pseudo-norm $\rho$ on $G$ such that $\rho^{-1}\left[0,\frac{1}{2^{n}}\right)\subset U_{n}\subset \rho^{-1}\left[0,\frac{1}{2^{n}}\right]$, for every $n\in\N_{0}$, and in particular $\ker\rho=\bigcap\limits_{n\in\N_{0}}U_{n}$.
\end{theorem}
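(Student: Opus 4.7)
The construction I would use is the classical Birkhoff--Kakutani recipe, adapted to the hull-structure setting. First, define the ``depth'' function $\phi : G \to [0,1]$ by
\[
\phi(x) = \inf\left\{2^{-n} : n \in \N_{0},\ x \in U_{n}\right\}
\]
(with $\inf\emptyset=1$; note $\phi(0_{G})=0$ since every $U_{n}$ contains $0_{G}$), and then set
\[
\rho(x) = \1\wedge \inf\left\{\sum_{i=1}^{k}\phi(x_{i}) : k \in \N,\ x = x_{1}+\cdots+x_{k}\right\}.
\]
Subadditivity of $\rho$ and $\rho(0_{G})=0$ follow immediately from concatenating decompositions, and by construction $\rho \le \phi \le \1$.

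The crux is the combinatorial \emph{string lemma}: if $y_{i}\in U_{n_{i}}$ for $i=1,\dots,k$ and $\sum_{i}2^{-n_{i}}\le 2^{-N}$, then $y_{1}+\cdots+y_{k}\in U_{N}$. I would prove this by ``binary carrying'': whenever two of the $n_{i}$'s equal some value $m$, replace the corresponding elements by their sum, viewed at level $m-1$ via $U_{m}+U_{m}\subset U_{m-1}$. This preserves both $y_{1}+\cdots+y_{k}$ and $\sum 2^{-n_{i}}$, and after finitely many carries all indices are distinct. Under the bound $\sum 2^{-n_{i}}\le 2^{-N}$ this forces either the multiset to be $\{N\}$ or all indices to be $\ge N+1$; in the latter case a right-to-left chained combine (using that smaller sets embed in larger ones) lands the total sum in $U_{N}$. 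Applying the lemma: if $\rho(x)<2^{-n}$ then some decomposition has $\sum \phi(x_{i})<2^{-n}$, so $x\in U_{n}$; conversely $x\in U_{n}$ gives $\phi(x)\le 2^{-n}$ and hence $\rho(x)\le 2^{-n}$. This establishes the sandwich inclusion, and intersecting over $n$ yields $\ker\rho=\bigcap_{n}U_{n}$.

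The main obstacle is proving lower $\mathcal{Q}$-continuity, i.e.\ $\rho^{-1}[0,r]\in\mathcal{Q}$ for every $r\ge 0$. The case $r=0$ reduces to $\bigcap_{n}U_{n}\in\mathcal{Q}$, which holds as hull structures are closed under arbitrary intersection. For $r>0$ the difficulty is that $\rho$ is defined as an infimum, so its sublevel sets are \emph{a priori} unions of $\mathcal{Q}$-sets, whereas hull structures are closed only under intersection. In the additive case, every Minkowski sum $U_{n_{1}}+\cdots+U_{n_{k}}$ lies in $\mathcal{Q}$, and I would aim to rewrite $\rho^{-1}[0,r]$ as an intersection over $\varepsilon>0$ of $\mathcal{Q}$-sets obtained from the string lemma, which collapses the ``union over representations'' at each $\varepsilon$-level into a single $\mathcal{Q}$-set. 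In the translation-invariant case the argument is different, exploiting that every translate $U_{n}-e$ lies in $\mathcal{Q}$, in the spirit of the Fremlin reference cited in the paper. In both cases the challenge is the dyadic bookkeeping needed to turn the infimum-based definition of $\rho$ into an intersection-of-hulls description at each level $r$.
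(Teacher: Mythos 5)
Your construction of $\rho$ and the sandwich inclusions are sound, and in the \emph{additive} case your plan does close: the string lemma shows that for each dyadic $q=\sum_i k_i2^{-i}$ the strict sublevel set $\left\{\rho<q\right\}$ is contained in the single Minkowski sum $V_q:=\sum_i k_iU_i$, which in turn lies in $\left\{\rho\le q\right\}$, whence $\rho^{-1}\left[0,r\right]=\bigcap_{q>r}V_q\in\mathcal{Q}$, each $V_q$ being in $\mathcal{Q}$ by additivity. This is exactly the paper's additive argument, merely reorganized around your carrying lemma instead of the inclusion $V_p+V_q\subset V_{p+q}$.

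The genuine gap is the translation-invariant case. You propose to show that the \emph{same} Birkhoff--Kakutani $\rho$ has $\mathcal{Q}$-sublevel sets by rewriting the infimum as an intersection, but this is not hard bookkeeping --- it is false in general. The sublevel sets of your $\rho$ are intersections of Minkowski sums of the $U_n$, and when $\mathcal{Q}$ is translation invariant but not additive (e.g.\ the full sets in $\R^2$, which the paper shows are not closed under Minkowski sums) there is no reason for these to lie in $\mathcal{Q}$; no intersection-of-hulls description can exist if the set simply is not in $\mathcal{Q}$. One must replace $\rho$ by a smaller function. The paper's device is to take $\theta:=\bigvee\Lambda$, where $\Lambda$ is the set of all lower $\mathcal{Q}$-continuous minorants of $\rho$; then $\theta$ is lower $\mathcal{Q}$-continuous by construction and still satisfies the sandwich because $2^{-n}\1_{G\backslash U_n}\in\Lambda$. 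The real work, entirely absent from your sketch, is to prove that $\theta$ is still \emph{subadditive}. This is where translation invariance enters: for fixed $e$ the function $\theta\left(\cdot+e\right)-\rho\left(e\right)$ is a lower $\mathcal{Q}$-continuous minorant of $\rho$ (its sublevel sets are preimages under $T_e$ of sublevel sets of $\theta$), hence it is $\le\theta$ by maximality; a second pass with $\theta\left(\cdot+f\right)-\theta\left(f\right)$ then gives $\theta\left(f+g\right)\le\theta\left(f\right)+\theta\left(g\right)$. Your appeal to ``translates of $U_n$ lie in $\mathcal{Q}$, in the spirit of Fremlin'' points in the right direction but supplies neither the modified function nor the maximality argument that makes it subadditive.
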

\begin{proof}
\textbf{Additive case.} In this proof if $n\in\N$ and $A\subset G$, the notation $nA$ means $A+...+A$ ($n$ times).

Let $\Q_{2}$ denote the set of positive dyadic rationals. For $q\in\Q_{2}$ let $V_{q}:=\sum\limits_{i=0}^{n}k_{i}U_{i}$, where $k_{0}\in\N_{0}$ and $k_{1},...,k_{n}\in\left\{0,1\right\}$ are such that $q=\sum\limits_{i=0}^{n}k_{i}2^{-i}$. Clearly, $V_{q}\in\mathcal{Q}$. Note that $V_{2^{-n}}=U_{n}$, for every $n\in\N_{0}$.

Let us show that $V_{p}+V_{q}\subset V_{p+q}$, for any $p,q\in\Q_{2}$ (and in particular, if $q\le r$, then $V_{q}\subset V_{q}+V_{r-q}\subset V_{r}$). Namely, we will prove by induction on $n$ that if $p=\sum\limits_{i=0}^{n}k_{i}2^{-i}$ and $q=\sum\limits_{i=0}^{n}l_{i}2^{-i}$, then $V_{p}+V_{q}\subset V_{p+q}$. The case $n=0$ is obvious. Assume that the claim is proven for $n-1$ and let $p'=\sum\limits_{i=0}^{n-1}k_{i}2^{-i}$, $q'=\sum\limits_{i=0}^{n-1}l_{i}2^{-i}$. By the assumption of induction we have $V_{p'}+V_{q'}\subset V_{p'+q'}$.

If $k_{n}=1$, and $l_{n}=0$, we have $V_{p}=V_{p'}+U_{n}$, $V_{q}=V_{q'}$ and $V_{p+q}=V_{p'+q'}+U_{n}\supset V_{p'}+V_{q'}+U_{n}=V_{p}+V_{q}$. The case of $k_{n}=0$, and $l_{n}=1$ is similar, and the case of $k_{n}=l_{n}=0$ is trivial. If $k_{n}=1=l_{n}$, then $V_{p}=V_{p'}+U_{n}$, $V_{q}=V_{q'}+U_{n}$, and $$V_{p}+V_{q}=V_{p'}+U_{n}+V_{q'}+U_{n}\subset V_{p'+q'}+U_{n-1}= V_{p'+q'}+V_{2^{1-n}}\subset V_{p'+q'+2^{1-n}}=V_{p+q},$$ where the last inclusion follows from the assumption of induction.\medskip

Define $\rho:E\to \left[0,\8\right]$ by $\rho\left(e\right):=\bigwedge\left\{q\in\Q_{2},~ e\in V_{q}\right\}$. Using the fact that $\left(V_{q}\right)_{q\in\Q_{2}}$ is increasing, one can show that $\rho^{-1}\left[0,r\right]=\bigcap\limits_{\Q_{2}\ni q> r}V_{q}\in\mathcal{Q}$ and $\rho^{-1}\left[0,r\right)=\bigcup\limits_{Q_{2}\ni q< r}V_{q}$, for every $r>0$. Note that $\rho^{-1}\left[0,q\right)\subset V_{q}\subset \rho^{-1}\left[0,q\right]$, for every $q\in\Q_{2}$, and in particular $\rho^{-1}\left[0,2^{-n}\right)\subset U_{n}\subset \rho^{-1}\left[0,2^{-n}\right]$, for every $n\in\N_{0}$. It follows that $\ker \rho=\bigcap\limits_{n\in\N}U_{n}\ni 0_{G}$.

For $f,g\in G$, for every $\Q_{2}\ni p>\rho\left(f\right)$ and $\Q_{2}\ni q>\rho\left(g\right)$ we have that $f\in V_{p}$ and $g\in V_{q}$, therefore $f+g\in V_{p+q}$, hence $\rho\left(f+g\right)\le p+q$. We conclude that $\rho$ is subadditive. Finally, $\rho\wedge\1$ is the desired $\left[0,1\right]$-valued $\mathcal{Q}$-pseudo-norm.\medskip

\textbf{Translation invariant case.} First, note that $\Po\left(G\right)$ is an additive hull structure, and so by the previous case there is a $\left[0,1\right]$-valued pseudo-norm $\rho$ on $G$ such that $\rho^{-1}\left[0,\frac{1}{2^{n}}\right)\subset U_{n}\subset \rho^{-1}\left[0,\frac{1}{2^{n}}\right]$, for every $n\in\N_{0}$. Let $\Lambda$ be the set of all lower $\mathcal{Q}$-continuous functions $\lambda\le \rho$. Let $\theta:=\bigvee\Lambda\le \rho$, which is lower $\mathcal{Q}$-continuous, and $\theta\le \rho\le\1$. For every $n\in\N_{0}$ we have $\frac{1}{2^{n}}\1_{G\backslash U_{n}}\in\Lambda$, hence $\frac{1}{2^{n}}\1_{G\backslash U_{n}}\le\theta$, and so  $\theta^{-1}\left[0,\frac{1}{2^{n}}\right)\subset U_{n}$; on the other hand $\theta\le\rho$ yields $U_{n}\subset \rho^{-1}\left[0,\frac{1}{2^{n}}\right]\subset \theta^{-1}\left[0,\frac{1}{2^{n}}\right]$.

It is left show that $\theta$ is subadditive. Fix $e\in G$ and consider $\theta_{e}:=\theta\left(\cdot+e\right)-\rho\left(e\right)$. For every $g\in G$ we have $\theta_{e}\left(g\right):=\theta\left(g+e\right)-\rho\left(e\right)\le \rho\left(g+e\right)-\rho\left(e\right)\le \rho\left(g\right)$, and so $\theta_{e}\le\rho$. We claim that $\theta_{e}$ is lower $\mathcal{Q}$-continuous. Indeed,  $\theta_{e}\left(g\right)\le r$ $\Leftrightarrow$ $\theta\left(g+e\right)\le \rho\left(e\right)+r$ $\Leftrightarrow$ $T_{e}g\in \theta^{-1}\left(-\8,\rho\left(e\right)+r\right]\in\mathcal{Q}$, and so $\theta^{-1}_{e}\left(-\8,r\right]=T_{e}^{-1}\theta^{-1}\left(-\8,\rho\left(e\right)+r\right]\in \mathcal{Q}$. It follows that $\theta_{e}\in\Lambda$, and so $\theta_{e}\le \theta$, from where $\theta\left(g+e\right)-\rho\left(e\right)\le \theta\left(g\right)$, for every $g\in G$. Next, we fix $f\in G$ and consider $\lambda_{f}:=\theta\left(\cdot+f\right)-\theta\left(f\right)$. Analogously to the previous step, we show that $\lambda_{f}\in\Lambda$, so that $\lambda_{f}\le \theta$, and so $\theta\left(f+g\right)-\theta\left(f\right)\le \theta\left(g\right)$, for every $g\in G$.
\end{proof}

A topology $\tau$ on $G$ is \emph{additive} if the addition is continuous from $\left(G,\tau\right)\times \left(G,\tau\right)$ into $\left(G,\tau\right)$. We denote the filter of $\tau$-neighborhoods of $0_{G}$ by $\tau_{0}$. The following is standard.

\begin{proposition}\label{string0}
Assume that $\tau$ is an additive topology on $G$. If $U\in\tau_{0}$, there is $V\in\tau_{0}$ such that $V+V\subset U$.
\end{proposition}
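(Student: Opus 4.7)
The plan is to unfold the definition of additivity of $\tau$ at the distinguished point $\left(0_{G},0_{G}\right)\in G\times G$. The addition map $+:\left(G,\tau\right)\times\left(G,\tau\right)\to\left(G,\tau\right)$ sends $\left(0_{G},0_{G}\right)$ to $0_{G}+0_{G}=0_{G}\in U$, so by continuity of $+$ at $\left(0_{G},0_{G}\right)$ the preimage $+^{-1}\left(U\right)$ is a $\tau\times\tau$-neighborhood of $\left(0_{G},0_{G}\right)$.

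Since the product topology $\tau\times\tau$ has as a neighborhood base at $\left(0_{G},0_{G}\right)$ the collection of rectangles $V_{1}\times V_{2}$ with $V_{1},V_{2}\in\tau_{0}$, I would pick such $V_{1},V_{2}\in\tau_{0}$ with $V_{1}\times V_{2}\subset +^{-1}\left(U\right)$, which is precisely the inclusion $V_{1}+V_{2}\subset U$. Setting $V:=V_{1}\cap V_{2}$ gives a member of the filter $\tau_{0}$ with $V+V\subset V_{1}+V_{2}\subset U$, as required.

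There is essentially no obstacle here: the statement is the standard ``halving'' property for additive topologies, and the only ingredient used is that the neighborhood filter of $\left(0_{G},0_{G}\right)$ in the product topology is generated by rectangles of $\tau_{0}$-neighborhoods, together with the definition of continuity of $+$ at $\left(0_{G},0_{G}\right)$. No commutativity, associativity, or cancellation of $G$ is invoked beyond $0_{G}+0_{G}=0_{G}$, which is why the result holds at the level of monoids.
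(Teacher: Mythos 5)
Your proof is correct and follows essentially the same route as the paper: continuity of $+$ at $\left(0_{G},0_{G}\right)$ gives $V_{1},V_{2}\in\tau_{0}$ with $V_{1}+V_{2}\subset U$, and one sets $V:=V_{1}\cap V_{2}$. Your version merely spells out the rectangle-base description of the product neighborhood filter, which the paper leaves implicit.
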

\begin{proof}
As $\bigplus:G\times G\to G$ is continuous, there are $V_{1},V_{2}\in\tau_{0}$ such that $\bigplus\left(V_{1}\times V_{2}\right)\subset U$. Then $V:=V_{1}\cap V_{2}$ does the job.
\end{proof}

We call $\tau$ a \emph{$\mathcal{Q}$-topology} if it has a base at $0_{G}$ consisting of members of $\mathcal{Q}$, i.e. $\tau_{0}\cap \mathcal{Q}$ is a base for $\tau_{0}$.

\begin{lemma}\label{string}
Assume that $\tau$ is an additive $\mathcal{Q}$-topology on $G$. For every sequence $\left\{W_{n}\right\}_{n\in\N_{0}}\subset\tau_{0}$  there is a $\mathcal{Q}$-string $\left\{U_{n}\right\}_{n\in\N_{0}}\subset\tau_{0}$ such that $U_{n}\subset W_{n}$, for every $n\in\N_{0}$
\end{lemma}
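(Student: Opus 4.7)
The plan is to construct the $\mathcal{Q}$-string $\{U_n\}_{n\in\N_0}$ by recursion on $n$, at each stage using the $\mathcal{Q}$-topology property to drop inside a $\mathcal{Q}$-member and Proposition \ref{string0} to halve the previous neighborhood. There is really no conceptual obstacle; the only care needed is to satisfy three requirements simultaneously at step $n$: membership in $\mathcal{Q}$, containment in $W_n$, and the string condition $U_n+U_n\subset U_{n-1}$.

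Concretely, I would begin the recursion by using that $\tau$ is a $\mathcal{Q}$-topology to pick $U_0\in\tau_0\cap\mathcal{Q}$ with $U_0\subset W_0$. For the inductive step, assume $U_0,\ldots,U_{n-1}\in\tau_0\cap\mathcal{Q}$ have been chosen so that $U_i\subset W_i$ and $U_i+U_i\subset U_{i-1}$ for the relevant indices. Apply Proposition \ref{string0} to $U_{n-1}\in\tau_0$ to get $V\in\tau_0$ with $V+V\subset U_{n-1}$. Form $V\cap W_n\in\tau_0$, and use once more that $\tau$ has a base at $0_G$ consisting of members of $\mathcal{Q}$ to select $U_n\in\tau_0\cap\mathcal{Q}$ with $U_n\subset V\cap W_n$.

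Then $U_n\in\mathcal{Q}$ and $U_n\in\tau_0$ by construction, $U_n\subset W_n$ because $U_n\subset V\cap W_n\subset W_n$, and finally $U_n+U_n\subset V+V\subset U_{n-1}$, so the string condition at level $n$ is satisfied. The recursion therefore produces the desired $\mathcal{Q}$-string $\{U_n\}_{n\in\N_0}\subset\tau_0$ subordinate to $\{W_n\}_{n\in\N_0}$.

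The hardest part is essentially bookkeeping — it is worth noting that additivity of $\mathcal{Q}$ or translation-invariance (the hypotheses used in Theorem \ref{strps}) plays no role here; the lemma uses only that $\tau$ is additive and a $\mathcal{Q}$-topology, and that intersection with an open neighborhood preserves membership in $\tau_0$.
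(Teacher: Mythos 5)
Your argument is correct and is essentially the same as the paper's: both proceed by the same recursion, using the $\mathcal{Q}$-base to choose $U_n\in\tau_0\cap\mathcal{Q}$ inside the intersection of $W_n$ with a set $V$ obtained from Proposition \ref{string0} satisfying $V+V\subset U_{n-1}$. The paper merely phrases the same construction with an explicit auxiliary sequence $\left\{V_{n}\right\}$ starting from $V_{0}:=G$.
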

\begin{proof}
We construct the string as well as an auxiliary sequence $\left\{V_{n}\right\}_{n\in\N_{0}}\subset\tau_{0}$ inductively. Let $V_{0}:=G$. If $V_{n}$ is constructed, since $\tau_{0}\cap \mathcal{Q}$ is a base for $\tau_{0}$, there is $U_{n}\in\tau_{0}\cap\mathcal{Q}$ such that $U_{n}\subset V_{n}\cap W_{n}$. According to Proposition \ref{string0} there is $V_{n+1}\in\tau_{0}$ such that $V_{n+1}+V_{n+1}\subset U_{n}$. This way we always have $U_{n}\subset W_{n}$ and $U_{n+1}+U_{n+1}\subset V_{n+1}+V_{n+1}\subset U_{n}$, for every $n\in\N_{0}$.
\end{proof}

We now add the second hull structure into the picture. We call a hull structure $\mathcal{R}$ on $G$ \emph{basic} if it is $1$-algebraic additive and such that $\left\{0_{G}\right\}\in\mathcal{R}$.

\begin{proposition}\label{1ap}
Let $\mathcal{R}$ be a basic hull structure on $G$ and let $\rho$ be a pseudo-norm on $G$. Then, $\rho_{\mathcal{R}}$ is a $\mathcal{R}$-pseudo-norm.
\end{proposition}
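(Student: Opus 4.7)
The plan is to check the three defining properties of an $\mathcal{R}$-pseudo-norm for the candidate $\rho_{\mathcal{R}}(x) = \bigvee_{y \in \{x\}^{\mathcal{R}}} \rho(y)$: lower $\mathcal{R}$-continuity, vanishing at $0_G$, and subadditivity. The first two will be essentially free from the definitions, and the entire content of the proposition is the subadditivity step, which is where additivity of $\mathcal{R}$ is used.

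Lower $\mathcal{R}$-continuity is immediate from what is already recorded in Section~\ref{hull}: for any extended real-valued $f$ the identity $f_{\mathcal{R}}^{-1}[-\infty,r] = (f^{-1}[-\infty,r])_{\mathcal{R}}\in\mathcal{R}$ holds, so taking $f=\rho$ already yields the required property. For the value at the base point, the basicness assumption gives $\{0_G\}\in\mathcal{R}$, hence $\{0_G\}^{\mathcal{R}}=\{0_G\}$ and $\rho_{\mathcal{R}}(0_G)=\rho(0_G)=0$.

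For subadditivity, the key observation I would use is that for any $x,y\in G$,
\[
\{x+y\}^{\mathcal{R}}\subset \{x\}^{\mathcal{R}}+\{y\}^{\mathcal{R}}.
\]
Indeed, additivity of $\mathcal{R}$ makes the Minkowski sum on the right a member of $\mathcal{R}$, and it manifestly contains $x+y$, so it must contain the smallest $\mathcal{R}$-set over $x+y$. Given this inclusion, every $z\in\{x+y\}^{\mathcal{R}}$ factors as $z=u+v$ with $u\in\{x\}^{\mathcal{R}}$ and $v\in\{y\}^{\mathcal{R}}$, whence by subadditivity of $\rho$ we get $\rho(z)\le \rho(u)+\rho(v)\le \rho_{\mathcal{R}}(x)+\rho_{\mathcal{R}}(y)$. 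Taking the supremum over $z\in\{x+y\}^{\mathcal{R}}$ then gives $\rho_{\mathcal{R}}(x+y)\le \rho_{\mathcal{R}}(x)+\rho_{\mathcal{R}}(y)$.

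The only step that requires any thought is the displayed Minkowski-type inclusion, and it is really the minimality of the hull operator combined with additivity of $\mathcal{R}$; I would not expect any obstacle beyond recognising that this is the right reformulation. Note also that the $1$-algebraic hypothesis is not explicitly invoked in this argument — it is used implicitly in the definition of $f_{\mathcal{R}}$ and in the lower continuity rephrasing from Section~\ref{hull} — while the additivity and the condition $\{0_G\}\in\mathcal{R}$ from the basicness hypothesis are exactly what is needed for the two nontrivial points above.
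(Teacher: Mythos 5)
Your proposal is correct and follows essentially the same route as the paper's own proof: the value at $0_G$ via $\{0_G\}\in\mathcal{R}$, and subadditivity via the Minkowski-type inclusion $\{x+y\}^{\mathcal{R}}\subset\{x\}^{\mathcal{R}}+\{y\}^{\mathcal{R}}$ obtained from additivity of $\mathcal{R}$, followed by decomposing each element of the hull and taking the supremum. Your treatment of lower $\mathcal{R}$-continuity via the identity $\rho_{\mathcal{R}}^{-1}[-\infty,r]=(\rho^{-1}[-\infty,r])_{\mathcal{R}}$ just makes explicit what the paper dismisses with ``true for any function.''
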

\begin{proof}
Lower $\mathcal{R}$-continuity is true for any function, and so we only need to show that $\rho_{\mathcal{R}}$ is a pseudo-norm. First, since $\left\{0_{G}\right\}\in\mathcal{R}$, it follows that $\left\{0_{G}\right\}^{\mathcal{R}}=\left\{0_{G}\right\}$, and so $\rho_{\mathcal{R}}\left(0_{G}\right)=\rho\left(0_{G}\right)=0$. Let $f,g\in G$. Additivity of $\mathcal{R}$ implies that $f+g\in\left\{f\right\}^{\mathcal{R}}+\left\{g\right\}^{\mathcal{R}}\in\mathcal{R}$, and so $\left\{f+g\right\}^{\mathcal{R}}\subset \left\{f\right\}^{\mathcal{R}}+\left\{g\right\}^{\mathcal{R}}$. For every $h\in\left\{f+g\right\}^{\mathcal{R}}$ there are $u\in \left\{f\right\}^{\mathcal{R}}$ and $v\in\left\{g\right\}^{\mathcal{R}}$ such that $h=u+v$. Therefore, $\rho\left(h\right)=\rho\left(u+v\right)\le\rho\left(u\right)+\rho\left(v\right)\le \rho_{\mathcal{R}}\left(f\right)+\rho_{\mathcal{R}}\left(g\right)$. Taking supremum over all $h$ yields $\rho_{\mathcal{R}}\left(f+g\right)\le \rho_{\mathcal{R}}\left(f\right)+\rho_{\mathcal{R}}\left(g\right)$.
\end{proof}

We can now state the main operating result of the section.

\begin{theorem}\label{pseudo}
Assume that we are given the following data:
\begin{itemize}
\item Hull structures $\mathcal{Q}$ and $\mathcal{R}$ on $G$ such that $\mathcal{R}$ is basic, and $\mathcal{Q}$ is $\mathcal{R}$-enhancible and either additive or translation invariant;
\item An additive topology $\tau$ on $G$ which is a $\mathcal{Q}$-topology and a $\mathcal{R}$-topology.
\end{itemize}
Then, for every sequence $\left\{W_{n}\right\}_{n\in\N_{0}}\subset\tau_{0}$ there is a $\left[0,1\right]$-valued $\mathcal{Q}\cap\mathcal{R}$-pseudo-norm $\rho$, which  is $\tau$-continuous at $0_{G}$ and such that $\rho^{-1}\left[0,\frac{1}{2^{n}}\right)\subset W_{n}$, for every $n\in\N_{0}$.
\end{theorem}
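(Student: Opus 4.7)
The strategy is to first build a $\mathcal{Q}$-pseudo-norm directly from the given neighborhoods via Lemma~\ref{string} and Theorem~\ref{strps}, and then to enhance it to a $\mathcal{Q}\cap\mathcal{R}$-pseudo-norm by applying the $\mathcal{R}$-core operator level-by-level. The $\mathcal{R}$-enhancibility of $\mathcal{Q}$ will guarantee that the $\mathcal{Q}$-property survives the enhancement, while Proposition~\ref{1ap} supplies the $\mathcal{R}$-property and Proposition~\ref{1a}(ii) preserves continuity at $0_{G}$.

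\emph{Step 1: a $\mathcal{Q}$-pseudo-norm.} Because $\tau$ is an additive $\mathcal{Q}$-topology, Lemma~\ref{string} produces a $\mathcal{Q}$-string $\{U_{n}\}_{n\in\N_{0}}\subset\tau_{0}$ with $U_{n}\subset W_{n}$ for all $n$. Since $\mathcal{Q}$ is additive or translation-invariant, Theorem~\ref{strps} yields a $[0,1]$-valued $\mathcal{Q}$-pseudo-norm $\rho$ on $G$ with
\[
\rho^{-1}\left[0,\tfrac{1}{2^{n}}\right)\subset U_{n}\subset\rho^{-1}\left[0,\tfrac{1}{2^{n}}\right]
\]
for every $n\in\N_{0}$. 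The chain $U_{n+1}\subset\rho^{-1}[0,1/2^{n+1}]\subset\rho^{-1}[0,1/2^{n})$ combined with $U_{n+1}\in\tau_{0}$ shows $\rho$ is $\tau$-continuous at $0_{G}$.

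\emph{Step 2: enhancement by $\mathcal{R}$.} Put $\theta:=\rho_{\mathcal{R}}$. Since $\mathcal{R}$ is basic, Proposition~\ref{1ap} guarantees $\theta$ is an $\mathcal{R}$-pseudo-norm; it is also $[0,1]$-valued because $\theta$ is the pointwise supremum of values of $\rho\le\1$. For any $r\ge 0$, the identity $\theta^{-1}[0,r]=(\rho^{-1}[0,r])_{\mathcal{R}}$ (from the formula $f_{\mathcal{R}}^{-1}[-\8,r]=(f^{-1}[-\8,r])_{\mathcal{R}}$) together with $\mathcal{R}$-enhancibility applied to $0_{G}\in\rho^{-1}[0,r]\in\mathcal{Q}$ gives $\theta^{-1}[0,r]\in\mathcal{Q}\cap\mathcal{R}$; thus $\theta$ is a $\mathcal{Q}\cap\mathcal{R}$-pseudo-norm. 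The pointwise inequality $\theta\ge\rho$ produces $\theta^{-1}[0,1/2^{n})\subset\rho^{-1}[0,1/2^{n})\subset U_{n}\subset W_{n}$. Finally, Proposition~\ref{1a}(ii), applied to $\rho$ which is $\tau$-continuous at $0_{G}$ with $\rho(0_{G})=0$, delivers $\tau$-continuity of $\theta=\rho_{\mathcal{R}}$ at $0_{G}$.

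\emph{Main obstacle.} The natural temptation is to apply Lemma~\ref{string} and Theorem~\ref{strps} directly to the intersection $\mathcal{Q}\cap\mathcal{R}$; however, even when both $\mathcal{Q}$ and $\mathcal{R}$ are additive, their intersection need not inherit additivity or translation-invariance, so Theorem~\ref{strps} is not available for $\mathcal{Q}\cap\mathcal{R}$. The enhancibility hypothesis is exactly what allows us to sidestep this: we work with a single hull structure at each stage, producing $\rho$ first and then transferring its sub-level sets into $\mathcal{Q}\cap\mathcal{R}$ via the $\mathcal{R}$-core operator. Verifying that this transfer preserves both the subadditivity and the $\mathcal{Q}$-structure is the core technical content, and it is handled uniformly by Propositions~\ref{1a}(ii) and~\ref{1ap} together with the definition of enhancibility.
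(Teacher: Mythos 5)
Your proposal is correct and follows essentially the same route as the paper: construct a $\mathcal{Q}$-string via Lemma~\ref{string}, obtain a $\mathcal{Q}$-pseudo-norm via Theorem~\ref{strps}, and then pass to $\rho_{\mathcal{R}}$, using Proposition~\ref{1ap}, Proposition~\ref{1a}(ii), the identity $\rho_{\mathcal{R}}^{-1}[0,r]=\left(\rho^{-1}[0,r]\right)_{\mathcal{R}}$ together with $\mathcal{R}$-enhancibility, and the inequality $\rho_{\mathcal{R}}\ge\rho$, exactly as in the paper's argument.
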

\begin{proof}
First, using Lemma \ref{string} construct a $\mathcal{Q}$-string $\left\{U_{n}\right\}_{n\in\N_{0}}\subset\tau_{0}$, subordinate to $\left\{W_{n}\right\}_{n\in\N_{0}}$. Then, using Theorem \ref{strps} find a $\left[0,1\right]$-valued $\mathcal{Q}$-pseudo-norm $\rho$ with $\rho^{-1}\left[0,\frac{1}{2^{n}}\right)\subset U_{n}\subset \rho^{-1}\left[0,\frac{1}{2^{n}}\right]$, for every $n\in\N_{0}$. The first inclusion implies that $\rho^{-1}\left[0,1\right)\subset W_{n}$, for every $n\in\N_{0}$, while the second implies that $\rho$ is $\tau$-continuous at $0_{G}$. By Proposition \ref{1ap} and part (ii) of Proposition \ref{1a} $\rho_{\mathcal{R}}$ is a $\left[0,1\right]$-valued pseudo-norm, which is $\tau$-continuous at $0_{G}$. Moreover,  $\rho_{\mathcal{R}}$ is a $\mathcal{Q}\cap\mathcal{R}$-pseudo-norm, as $\rho^{-1}_{\mathcal{R}}\left[0,r\right]=\left(\rho^{-1}\left[0,r\right]\right)_{\mathcal{R}}\in\mathcal{Q}\cap\mathcal{R}$, for every $r\ge 0$. Finally, $\rho_{\mathcal{R}}\ge\rho$ yields $\rho^{-1}_{\mathcal{R}}\left[0,\frac{1}{2^{n}}\right)\subset\rho^{-1}\left[0,\frac{1}{2^{n}}\right)\subset W_{n}$, for every $n\in\N_{0}$.
\end{proof}

\section{Group topologies}\label{group}

Let $G$ be a commutative group. If $\tau$ is an additive topology on $G$, then for every $e\in G$ the translation map $T_{e}$ is a homeomorphism. We call $\tau$ a \emph{group topology} if taking inverse is a continuous operation (in fact a homeomorphism) on $G$. A set $A\subset G$ is called \emph{symmetric} if $A=-A$. Let $\mathcal{S}$ be the hull structure of symmetric sets. A hull structure $\mathcal{Q}$ on $G$ is \emph{symmetric} if $Q\in\mathcal{Q}$ $\Rightarrow$ $-Q\in\mathcal{Q}$. Note that in this case $Q_{\mathcal{S}}=Q\cap -Q\in\mathcal{Q}$.

\begin{proposition}\label{sym}
Assume that $\mathcal{Q}$ is a symmetric hull structure on $G$. For an additive $\mathcal{Q}$-topology $\tau$ on $G$ the following conditions are equivalent:
\item[(i)] $\tau$ is a group topology;
\item[(ii)] If $g_{p}\xrightarrow[]{\tau}0_{G}$, then $-g_{p}\xrightarrow[]{\tau}0_{G}$;
\item[(iii)] $\tau$ is a $\mathcal{S}$-topology;
\item[(iv)] $\tau$ is a $\mathcal{Q}\cap \mathcal{S}$-topology.
\end{proposition}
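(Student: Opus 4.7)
The plan is to establish the cyclic chain (iv)$\Rightarrow$(iii)$\Rightarrow$(ii)$\Rightarrow$(i)$\Rightarrow$(iv). The implication (iv)$\Rightarrow$(iii) is immediate, since $\mathcal{Q}\cap\mathcal{S}\subset\mathcal{S}$ so any base in $\mathcal{Q}\cap\mathcal{S}$ is \emph{a fortiori} a base in $\mathcal{S}$. For (iii)$\Rightarrow$(ii), suppose $g_{p}\xrightarrow{\tau}0_{G}$ and fix $U\in\tau_{0}$. Pick a symmetric $V\in\tau_{0}\cap\mathcal{S}$ with $V\subset U$; eventually $g_{p}\in V$, so by symmetry $-g_{p}\in V\subset U$, which gives $-g_{p}\xrightarrow{\tau}0_{G}$. (One could alternatively invoke Proposition \ref{1ac} for the $1$-algebraic $\mathcal{S}$, noting $\{g\}^{\mathcal{S}}=\{g,-g\}$.)

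For (ii)$\Rightarrow$(i), we leverage additivity, so translations are $\tau$-homeomorphisms. If $g_{p}\xrightarrow{\tau}g$, then $g_{p}-g\xrightarrow{\tau}0_{G}$ via $T_{-g}$; by (ii), $-(g_{p}-g)=-g_{p}+g\xrightarrow{\tau}0_{G}$, and applying $T_{-g}$ once more yields $-g_{p}\xrightarrow{\tau}-g$. Hence inversion is continuous everywhere, i.e. $\tau$ is a group topology.

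For (i)$\Rightarrow$(iv), the key observation is precisely the remark preceding the proposition: since $\mathcal{Q}$ is symmetric and hull structures are closed under intersection, $Q_{\mathcal{S}}=Q\cap -Q\in\mathcal{Q}\cap\mathcal{S}$ for every $Q\in\mathcal{Q}$. Now take $U\in\tau_{0}$ and pick $V\in\tau_{0}\cap\mathcal{Q}$ with $V\subset U$ (using that $\tau$ is a $\mathcal{Q}$-topology). Since inversion is a $\tau$-homeomorphism, $-V\in\tau_{0}$, and thus $W:=V\cap -V\in\tau_{0}\cap\mathcal{Q}\cap\mathcal{S}$ with $W\subset U$. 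Hence $\tau_{0}\cap\mathcal{Q}\cap\mathcal{S}$ is a base at $0_{G}$.

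None of the steps is genuinely hard; the only subtlety is the simultaneous capture of membership in $\mathcal{Q}$, symmetry, and being a neighborhood in the last implication. This is resolved by the $Q\cap -Q$ construction, which is in turn a clean instance of the general enhancibility mechanism of Proposition \ref{enhancible}(i) applied with $\mathcal{R}=\mathcal{S}$ (one checks $\mathcal{S}$ is $1$-algebraic and $\mathcal{Q}$ is $\mathcal{S}$-enhancible via the same identity).
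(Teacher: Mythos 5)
Your proposal is correct and follows essentially the same route as the paper: the paper handles (ii)$\Leftrightarrow$(iii) by citing Proposition \ref{1ac}, (iii)$\Leftrightarrow$(iv) via the enhancibility mechanism built on $Q_{\mathcal{S}}=Q\cap -Q$, and (i)$\Leftrightarrow$(ii) by the standard translation-homeomorphism argument, all of which you have simply written out explicitly and arranged as a cycle instead of pairwise equivalences. The details you supply (in particular the $V\cap -V$ construction and the use of $T_{-g}$) are exactly the ones the paper's citations compress.
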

\begin{proof}
(i)$\Rightarrow$(ii) is obvious; the converse follows from the standard arguments and the fact that the translations are homeomorphisms. (ii)$\Leftrightarrow$(iii) can be deduced from Proposition \ref{1ac}, (iii)$\Leftrightarrow$(iv) follows from part (iii) of Proposition \ref{1a}.
\end{proof}

We call a pseudo-norm $\rho$ on $G$ \emph{symmetric} if $\rho\left(-g\right)=\rho\left(g\right)$, for every $g\in G$, which is equivalent to lower $\mathcal{S}$-continuity of $\rho$. Clearly, $\ker\rho$ and $\rho^{-1}\left(\R\right)$ are subgroups of $G$. If $\rho$ is a pseudo-norm, $\rho_{-1}\left(g\right):=\rho\left(-g\right)$ defines a pseudo-norm, and $\rho_{\mathcal{S}}=\rho\vee\rho_{-1}$ is a symmetric pseudo-norm. If $\tau$ is a group topology, $\mathcal{Q}$ is a symmetric hull structure, and $\rho$ is a $\tau$-continuous $\mathcal{Q}$-pseudo-norm, then $\rho_{-1}$ and $\rho_{\mathcal{S}}$ are $\tau$-continuous $\mathcal{Q}$-pseudo-norms.

A symmetric pseudo-norm $\rho$ generates a pseudo-metric on $G$ via $d_{\rho}:=\rho\left(\cdot-\cdot\right)$. Then $d_{\rho}$ is translation invariant, i.e. every translation map is a $d_{\rho}$-isometry. Conversely, every translation invariant pseudo-metric $d:G\times G\to \left[0,+\8\right]$ generates a symmetric pseudo-norm on $G$ by $\rho_{d}:=d\left(\cdot,0_{G}\right)$. It is easy to see that the two operations are inverses of each other and that the topology defined by $d_{\rho}$ is a group topology.

\begin{proposition}\label{psco}
For an additive topology $\tau$ on $G$ and a symmetric pseudo-norm $\rho$ the following conditions are equivalent:
\item[(i)] $\tau$ is stronger than the topology generated by $d_{\rho}$;
\item[(ii)] $d_{\rho}$ is continuous on $G\times G$;
\item[(iii)] $\rho$ continuous on $G$;
\item[(iv)] $\rho$ continuous at $0_{G}$;
\item[(v)] Every $\rho$-ball at $0_{G}$ is a $\tau$-neighborhood of $0_{G}$.
\end{proposition}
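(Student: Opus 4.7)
The plan is to prove the five conditions equivalent by a standard cyclic chain \textup{(i)}$\Rightarrow$\textup{(ii)}$\Rightarrow$\textup{(iii)}$\Rightarrow$\textup{(iv)}$\Rightarrow$\textup{(v)}$\Rightarrow$\textup{(i)}. Throughout, the two ingredients doing all the work are: $d_{\rho}$ is translation-invariant, and the translations $T_{e}$ are $\tau$-homeomorphisms (since $\tau$ is additive and $G$ is a group, so $T_{e}$ and $T_{-e}$ are mutually inverse and both $\tau$-continuous).

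For the easy half, \textup{(i)}$\Rightarrow$\textup{(ii)} follows because $d_{\rho}$ is automatically continuous as a map from $(G,d_{\rho})\times (G,d_{\rho})$ into $[0,+\infty]$, hence it remains continuous if the domain topology is replaced by a finer one. Then \textup{(ii)}$\Rightarrow$\textup{(iii)} is immediate from $\rho(g)=d_{\rho}(g,0_{G})$ and continuity of $g\mapsto (g,0_{G})$, and \textup{(iii)}$\Rightarrow$\textup{(iv)} is trivial. For \textup{(iv)}$\Rightarrow$\textup{(v)}, given $r>0$, continuity of $\rho$ at $0_{G}$ makes $\rho^{-1}[0,r)$ a $\tau$-open neighborhood of $0_{G}$, and the closed ball $\rho^{-1}[0,r]$ contains it, so both $\rho$-balls at $0_{G}$ lie in $\tau_{0}$.

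The slightly more interesting step is \textup{(v)}$\Rightarrow$\textup{(i)}. Since $d_{\rho}$ is translation-invariant, every open $d_{\rho}$-ball at a point $x\in G$ has the form $x+B$, where $B=\rho^{-1}[0,r)$ is a $\rho$-ball at $0_{G}$; by \textup{(v)}, $B\in\tau_{0}$, and since $T_{x}$ is a $\tau$-homeomorphism, $x+B=T_{x}(B)$ is a $\tau$-neighborhood of $x$. As the $d_{\rho}$-balls form a base for the $d_{\rho}$-topology at each point, every $d_{\rho}$-open set is $\tau$-open, which is \textup{(i)}. No step presents a genuine obstacle; the only subtlety is to remember that one needs $G$ to be a group (not just a monoid) precisely so that $T_{-x}$ exists and witnesses continuity of the inverse translation in this last step.
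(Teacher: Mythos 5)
Your proof is correct and follows the same route as the paper, which establishes exactly the cycle (i)$\Rightarrow$(ii)$\Rightarrow$(iii)$\Rightarrow$(iv)$\Rightarrow$(v)$\Rightarrow$(i), merely labelling each step ``straightforward'' or ``trivial''; you have simply supplied the details, in particular the translation-invariance argument for (v)$\Rightarrow$(i) that the paper leaves implicit.
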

\begin{proof}
(i)$\Rightarrow$(ii) is true for any pseudo-metric, (ii)$\Rightarrow$(iii) and (iv)$\Rightarrow$(v)$\Rightarrow$(i) are straightforward, (iii)$\Rightarrow$(iv) is trivial.
\end{proof}

In particular, the topology generated by $d_{\rho}$ is the weakest additive topology in which $\rho$ is continuous at $0_{G}$. Any collection of symmetric pseudo-norms generates a group topology in this sense. The convergence in this topology can be described by $g_{p}\to g$ iff $\rho\left(g_{p}-g\right)\to 0$, for each $\rho$ in the collection. If the involved pseudo-norms are $\mathcal{Q}$-pseudo-norms, the resulting topology is a $\mathcal{Q}$-topology.\medskip

For two symmetric pseudo-norms $\rho,\lambda$, we say that $\lambda$ is \emph{$\rho$-continuous}, if the topology generated by $\lambda$ is weaker than that generated by $\rho$. If also $\rho$ is $\lambda$-continuous, we say that they are \emph{equivalent}.

\begin{proposition}\label{pscont}For symmetric pseudo-norms $\rho,\lambda$ the following conditions are equivalent:
\item[(i)] $\lambda$ is $\rho$-continuous;
\item[(ii)] Whenever $g_{p}\xrightarrow[]{\rho}0_{G}$, it follows that $g_{p}\xrightarrow[]{\lambda}0_{G}$;
\item[(iii)] Whenever $g_{n}\xrightarrow[]{\rho}0_{G}$, it follows that $g_{n}\xrightarrow[]{\lambda}0_{G}$;
\item[(iv)] For every $\varepsilon>0$ there is $\delta>0$ such that whenever $\rho\left(g\right)<\delta$, it follows that $\lambda\left(g\right)<\varepsilon$.
\end{proposition}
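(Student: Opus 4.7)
The plan is to establish the cycle (i)$\Rightarrow$(ii)$\Rightarrow$(iii)$\Rightarrow$(iv)$\Rightarrow$(i). The implications (i)$\Rightarrow$(ii) and (ii)$\Rightarrow$(iii) are essentially free: (i) by definition means that the topology generated by $\lambda$ is weaker than that generated by $\rho$, so any net converging to $0_{G}$ in the stronger topology converges in the weaker one; and (iii) is just (ii) applied to nets indexed by $\N$.

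For (iii)$\Rightarrow$(iv) I would argue by contrapositive. If (iv) fails, there exists $\varepsilon>0$ such that for each $n\in\N$ the ball $\rho^{-1}\left[0,\frac{1}{n}\right)$ contains some point $g_{n}$ with $\lambda\left(g_{n}\right)\ge \varepsilon$. The sequence $\left\{g_{n}\right\}$ then satisfies $\rho\left(g_{n}\right)\to 0$ while $\lambda\left(g_{n}\right)\not\to 0$, i.e.\ $g_{n}\xrightarrow[]{\rho}0_{G}$ but $g_{n}\not\xrightarrow[]{\lambda}0_{G}$, contradicting (iii).

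For (iv)$\Rightarrow$(i) I would invoke Proposition \ref{psco}. Let $\tau_{\rho}$ be the topology generated by $\rho$ and $\tau_{\lambda}$ that generated by $\lambda$. By Proposition \ref{psco}, $\tau_{\rho}$ is stronger than $\tau_{\lambda}$ if and only if $\lambda$ is $\tau_{\rho}$-continuous at $0_{G}$, which in turn means that for every $\varepsilon>0$ the set $\lambda^{-1}\left[0,\varepsilon\right)$ is a $\tau_{\rho}$-neighborhood of $0_{G}$. Since the $\rho$-balls $\rho^{-1}\left[0,\delta\right)$ form a base of $\tau_{\rho}$-neighborhoods of $0_{G}$, this is equivalent to the inclusion $\rho^{-1}\left[0,\delta\right)\subset \lambda^{-1}\left[0,\varepsilon\right)$ for some $\delta>0$, which is precisely (iv).

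I don't anticipate a real obstacle here; the proposition is essentially the standard ``sequences suffice for first-countable pseudo-metric topologies'' packaged for symmetric pseudo-norms. The one small point to keep in mind is that we do not need to use first-countability abstractly in (iii)$\Rightarrow$(iv): the explicit choice of $g_{n}\in \rho^{-1}\left[0,\frac{1}{n}\right)$ does the job directly, and the translation-invariance of $d_{\rho}$ (noted before Proposition \ref{psco}) ensures that we may reduce everything to the behavior at $0_{G}$.
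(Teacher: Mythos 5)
Your proposal is correct and follows essentially the same route as the paper: the implications (i)$\Rightarrow$(ii)$\Rightarrow$(iii) are dismissed as immediate, (iv)$\Rightarrow$(i) is handled as routine via Proposition \ref{psco}, and (iii)$\Rightarrow$(iv) is proved by the identical contrapositive argument selecting $g_{n}$ with $\rho\left(g_{n}\right)<\frac{1}{n}$ and $\lambda\left(g_{n}\right)\ge\varepsilon$. No issues.
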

\begin{proof}
(iv)$\Rightarrow$(i) is routine, while (i)$\Rightarrow$(ii)$\Rightarrow$(iii) are trivial.

(iii)$\Rightarrow$(iv): Negation of (iv) means that there are $\varepsilon>0$ and $\left(g_{n}\right)_{n\in\N}\subset G$ such that $\rho\left(g_{n}\right)<\frac{1}{n}$ and $\lambda\left(g_{n}\right)\ge\varepsilon$, for every $n\in\N$, which contradicts (iii).
\end{proof}

In particular, $\rho$ and $\rho\wedge\1$ are equivalent. It also follows that if $\lambda\le\rho$, then $\lambda$ is $\rho$-continuous. Hence, the set of all pseudo-norms which are continuous with respect to an additive topology $\tau$ is a lower set in the collection of all symmetric pseudo-norms on $G$. We now show that a sequence of pseudo-norms generate the same topology as a single one.

\begin{lemma}\label{sp}
Let $\tau$ be an additive topology on $G$. If $\left\{\rho_{n}\right\}_{n\in\N}$ is a sequence of symmetric $\tau$-continuous $\mathcal{Q}$-pseudo-norms on $G$, then $\rho:=\bigvee\limits_{n\in\N}\frac{\rho_{n}\wedge\1}{2^{n}}$ (pointwise supremum) is a symmetric $\tau$-continuous $\mathcal{Q}$-pseudo-norm such that $\rho_{n}$ is $\rho$-continuous, for every $n\in\N$.
\end{lemma}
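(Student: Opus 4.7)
The plan is to verify the claimed four properties of $\rho$ in turn: it is a symmetric pseudo-norm, it is a $\mathcal{Q}$-pseudo-norm, it is $\tau$-continuous at $0_G$, and each $\rho_n$ is $\rho$-continuous. Nothing deep is needed; the argument is essentially a standard ``majorizing series'' estimate, and the work is just to record which earlier general facts supply each step.

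The pseudo-norm and $\mathcal{Q}$-pseudo-norm properties are immediate from the stability statements in Sections \ref{hull} and \ref{monoid}. For every $n\in\N$, $\rho_n\wedge\1$ is a symmetric $\mathcal{Q}$-pseudo-norm, and so is $\frac{1}{2^n}\left(\rho_n\wedge\1\right)$ by non-negative scalar multiplication. Pointwise supremum preserves symmetry, subadditivity, vanishing at $0_G$, and lower $\mathcal{Q}$-continuity, so $\rho$ is a symmetric $\left[0,1\right]$-valued $\mathcal{Q}$-pseudo-norm.

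For $\tau$-continuity at $0_G$, I would split the sup at a large index: given $\varepsilon>0$ pick $N\in\N$ with $2^{-N}<\varepsilon$, so that the tail $n>N$ satisfies $\frac{\rho_n\wedge\1}{2^n}\le 2^{-n}<\varepsilon$ identically on $G$. For the finitely many $n\le N$, use $\tau$-continuity of each $\rho_n$ at $0_G$ to pick $U_n\in\tau_0$ with $\rho_n<2^n\varepsilon$ on $U_n$; then $U:=\bigcap_{n\le N}U_n\in\tau_0$ lies in $\rho^{-1}\left[0,\varepsilon\right]$. This shows $\rho^{-1}\left[0,\varepsilon\right]\in\tau_0$ for every $\varepsilon>0$, which by Proposition \ref{psco} is enough.

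Finally, $\rho_n\wedge\1\le 2^n\rho$ pointwise by construction, so for any $\varepsilon\in\left(0,1\right)$ taking $\delta:=2^{-n}\varepsilon$ gives $\rho\left(g\right)<\delta\Rightarrow \left(\rho_n\wedge\1\right)\left(g\right)<\varepsilon<1\Rightarrow \rho_n\left(g\right)<\varepsilon$; by Proposition \ref{pscont} this yields $\rho$-continuity of $\rho_n\wedge\1$, and since $\rho_n$ is equivalent to $\rho_n\wedge\1$ (remarked after Proposition \ref{pscont}), also of $\rho_n$. I do not anticipate any real obstacle: the only point one must be slightly careful about is not forgetting to intersect with $\1$ when converting the $\rho$-smallness of $\rho_n\wedge\1$ into $\rho$-smallness of $\rho_n$ itself, which is handled by choosing $\delta$ small enough that the truncation at $1$ is not active.
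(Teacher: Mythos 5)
Your proof is correct and follows essentially the same route as the paper: the paper also dismisses everything except $\tau$-continuity as immediate from the stated stability properties, and its one-line continuity argument (dominating $\rho$ by the series $\sum_{n}2^{-n}\left(\rho_{n}\wedge\1\right)$ and invoking the lower-set property of $\tau$-continuous pseudo-norms) rests on exactly the tail-splitting estimate you write out explicitly. Your $\varepsilon$--$\delta$ verification that each $\rho_{n}$ is $\rho$-continuous, including the care with the truncation at $\1$, is the same observation the paper leaves implicit.
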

\begin{proof}
The only non-obvious claim is $\tau$-continuity of $\rho$. Since $\rho\le\sum\limits_{n\in\N}\frac{\rho_{n}\wedge\1}{2^{n}}$, which is a $\tau$-continuous pseudo-norm, it follows that $\rho$ is $\tau$-continuous.
\end{proof}

We can now derive a number of corollaries from Theorem \ref{pseudo}. Part (iv) in the following result is motivated by \cite[Section 3]{ddje}.

\begin{theorem}\label{generated}
Assume that we are given the following data:
\begin{itemize}
\item Symmetric hull structures $\mathcal{Q}$ and $\mathcal{R}$ on $G$ such that $\mathcal{R}$ is basic, and $\mathcal{Q}$ is $\mathcal{R}$-enhancible and either additive or translation invariant;
\item A group topology $\tau$ on $G$ which is a $\mathcal{Q}$-topology and a $\mathcal{R}$-topology.
\end{itemize}
Then:
\item[(i)] $\tau$ is generated by a collection of symmetric $\tau$-continuous $\left[0,1\right]$-valued $\mathcal{Q}\cap\mathcal{R}$-pseudo-norms.
\item[(ii)] If $\tau_{0}$ has a countable base, then $\tau$ is generated by a single symmetric $\tau$-continuous $\left[0,1\right]$-valued $\mathcal{Q}\cap\mathcal{R}$-pseudo-norm.
\item[(iii)] Assume that $f:G\to\R$ with $f\left(0_{G}\right)=0$ is continuous at $0_{G}$. Then, there is a symmetric $\tau$-continuous $\mathcal{Q}\cap\mathcal{R}$-pseudo-norm $\rho$ such that $f$ is $\rho$-continuous at $0_{G}$. In particular, $\ker\rho\subset\ker f$.
\item[(iv)] If $\tau$ is submetrizable (i.e. there is a weaker metrizable topology), then there is a total (a.k.a. non-degenerate) symmetric $\tau$-continuous $\mathcal{Q}\cap\mathcal{R}$-pseudo-norm.
\item[(v)] Assume that $\lambda$ is a $\tau$-continuous pseudo-norm on $G$. Then, there is a symmetric $\tau$-continuous $\left[0,1\right]$-valued $\mathcal{Q}\cap\mathcal{R}$-pseudo-norm $\rho$ on $G$, such that $\lambda$ is $\rho$-continuous.
\end{theorem}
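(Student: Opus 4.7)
The strategy for all five parts is uniform: choose a sequence $\left\{W_{n}\right\}_{n\in\N_{0}}\subset\tau_{0}$ tailored to the part at hand, invoke Theorem \ref{pseudo} to produce a $\left[0,1\right]$-valued $\mathcal{Q}\cap\mathcal{R}$-pseudo-norm $\rho$ that is $\tau$-continuous at $0_{G}$ and satisfies $\rho^{-1}\left[0,2^{-n}\right)\subset W_{n}$ for every $n\in\N_{0}$, and then symmetrize via $\rho_{\mathcal{S}}:=\rho\vee\rho_{-1}$ to land inside the symmetric pseudo-norms. Because $\mathcal{Q}$ and $\mathcal{R}$ are symmetric, $\rho_{-1}^{-1}\left[0,r\right]=-\rho^{-1}\left[0,r\right]\in\mathcal{Q}\cap\mathcal{R}$ for every $r\ge 0$, so $\rho_{-1}$ is a $\mathcal{Q}\cap\mathcal{R}$-pseudo-norm; the pointwise maximum of two such pseudo-norms is again a $\left[0,1\right]$-valued $\mathcal{Q}\cap\mathcal{R}$-pseudo-norm; and $\tau$-continuity at $0_{G}$ is preserved since inversion is a $\tau$-homeomorphism. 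The inequality $\rho_{\mathcal{S}}\ge\rho$ carries over every inclusion $\rho_{\mathcal{S}}^{-1}\left[0,2^{-n}\right)\subset\rho^{-1}\left[0,2^{-n}\right)\subset W_{n}$, so all control we impose on $\rho$ survives symmetrization.

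For (i), run the scheme for each $U\in\tau_{0}$ with $W_{0}:=U$ and $W_{n}:=G$ for $n\ge 1$ to obtain $\rho_{U}$ with $\rho_{U}^{-1}\left[0,1\right)\subset U$; by Proposition \ref{psco}, each $\rho_{U}$ is $\tau$-continuous, so the family $\left\{\rho_{U}:U\in\tau_{0}\right\}$ generates a group topology that is both contained in $\tau$ and contains every $U\in\tau_{0}$ in its neighborhood filter at $0_{G}$, hence equals $\tau$. For (ii), pick a countable base $\left\{W_{k}\right\}_{k\in\N}$ for $\tau_{0}$, produce pseudo-norms $\rho_{k}$ as in (i), and combine them through Lemma \ref{sp} into $\rho:=\bigvee_{k\in\N}\frac{\rho_{k}\wedge\1}{2^{k}}$; since each $\rho_{k}$ is $\rho$-continuous, every $W_{k}$ remains a $\rho$-neighborhood of $0_{G}$, so the $\rho$-topology equals $\tau$.

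For (iii), take $W_{n}:=f^{-1}\left(-2^{-n},2^{-n}\right)\in\tau_{0}$; the inclusion $\rho^{-1}\left[0,2^{-n}\right)\subset W_{n}$ reads as $\rho\left(g\right)<2^{-n}\Rightarrow\left|f\left(g\right)\right|<2^{-n}$, which after symmetrization gives $\rho$-continuity of $f$ at $0_{G}$ and the inclusion $\ker\rho\subset\ker f$. Part (iv) reduces to (iii) applied to $f:=d\left(\cdot,0_{G}\right)$, where $d$ is a metric inducing a weaker topology: then $f$ is $\tau$-continuous with $f\left(0_{G}\right)=0$ and $\ker f=\left\{0_{G}\right\}$ by Hausdorffness of $d$, so the resulting $\rho$ is total. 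For (v), take $W_{n}:=\lambda^{-1}\left[0,2^{-n}\right)\in\tau_{0}$; the resulting $\rho$ satisfies $\rho\left(g\right)<2^{-n}\Rightarrow\lambda\left(g\right)<2^{-n}$, which is condition (iv) of Proposition \ref{pscont}. The main point requiring care is that symmetrization must simultaneously preserve membership in $\mathcal{Q}\cap\mathcal{R}$, the $\left[0,1\right]$-range, $\tau$-continuity at $0_{G}$, and the governing inclusions, and all four demands hinge precisely on the symmetry hypothesis placed on $\mathcal{Q}$ and $\mathcal{R}$.
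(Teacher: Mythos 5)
Your proposal is correct; every step is backed either by Theorem \ref{pseudo} applied verbatim with the given pair $(\mathcal{Q},\mathcal{R})$ or by facts the paper records in Sections 3--4 (closure of $\mathcal{Q}$-pseudo-norms under pointwise suprema and $\wedge\,\1$, the fact that $\rho_{-1}$ and $\rho_{\mathcal{S}}$ remain $\tau$-continuous $\mathcal{Q}$-pseudo-norms when $\mathcal{Q}$ is symmetric, Lemma \ref{sp}, and Propositions \ref{psco} and \ref{pscont}). The choices of $\left\{W_{n}\right\}$ in parts (i)--(v) match the paper's, as does the reduction of (iv) to (iii) via $f:=d\left(\cdot,0_{G}\right)$ and of (v) via the balls of $\lambda$.

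The one genuine divergence is how symmetry is obtained. The paper folds the hull structure $\mathcal{S}$ of symmetric sets into the basic structure before invoking Theorem \ref{pseudo}: it checks that $\mathcal{R}\cap\mathcal{S}$ is basic, that $\mathcal{Q}$ is $\mathcal{R}\cap\mathcal{S}$-enhancible (part (ii) of Proposition \ref{enhancible}), and that $\tau$ is an $\mathcal{R}\cap\mathcal{S}$-topology (Proposition \ref{sym}), so the pseudo-norm produced is lower $\mathcal{S}$-continuous, i.e.\ symmetric, from the outset. You instead apply Theorem \ref{pseudo} with $\mathcal{R}$ itself and symmetrize afterwards by $\rho_{\mathcal{S}}=\rho\vee\rho_{-1}$, checking that this operation preserves the range, the $\tau$-continuity, membership of the balls in $\mathcal{Q}\cap\mathcal{R}$ (here is where symmetry of $\mathcal{Q}$ and $\mathcal{R}$ enters for you), and the governing inclusions via $\rho_{\mathcal{S}}\ge\rho$. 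Both routes are sound. The paper's version buys uniformity -- the same enhancibility machinery later absorbs $\mathcal{B}$ in the linear case (Proposition \ref{lenerated}) -- while yours is slightly more economical for this particular theorem, bypassing Propositions \ref{enhancible}(ii) and \ref{sym} at the cost of a short a posteriori verification that the paper has in any case already prepared in its discussion of $\rho_{\mathcal{S}}$.
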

\begin{proof}
First, it is easy to see that $\mathcal{R}\cap\mathcal{S}$ is a basic hull structure; moreover by part (ii) of Proposition \ref{enhancible}, $\mathcal{Q}$ is $\mathcal{R}\cap\mathcal{S}$-enhancible. By Proposition \ref{sym} $\tau$ is a $\mathcal{R}\cap\mathcal{S}$ topology.

(i): Take $W\in\tau_{0}$; according to Theorem \ref{pseudo} we can find a $\left[0,1\right]$-valued $\mathcal{Q}\cap\mathcal{R}\cap\mathcal{S}$-pseudo-norm $\rho$, which is $\tau$-continuous at $0_{G}$ and such that $\rho^{-1}\left[0,1\right)\subset W$. Clearly, $\rho$ is symmetric, and by Proposition \ref{psco}, $\rho$ is $\tau$-continuous. As $W$ was arbitrary, the claim follows.

(ii) is proven similarly, but starting with a basis $\left\{W_{n}\right\}_{n\in\N_{0}}$ for $\tau_{0}$.

(iii): We may assume that $f\left(0\right)=0$. Continuity of $f$ at $0_{G}$ implies that $W_{n}:=f^{-1}\left(-\frac{1}{n},\frac{1}{n}\right)\in\tau_{0}$, for every $n\in\N$. After this proceed as in (i) and (ii).

(iv) follows from (iii) for $f:=d\left(\cdot,0_{G}\right)$, where $d$ is the metric for a topology weaker than $\tau$. (v) follows from (iii) and Proposition \ref{psco}.
\end{proof}

\begin{remark}
Note that it is required in the definition of a hull structure that $G\in\mathcal{Q}$. However, if $\mathcal{Q}$ is closed with respect to arbitrary intersections, but contains no $G$, the latter can just be added to $\mathcal{Q}$ for it to become a hull structure $\mathcal{Q}'$. Applying Theorem \ref{pseudo} to $\mathcal{Q}'$ we get a $\mathcal{Q}'$-pseudo-norm $\rho$ which is not necessarily lower $\mathcal{Q}$ continuous, but such that small enough $\rho$-balls are members of $\mathcal{Q}$.

A possible situation when this idea is applicable is when $\mathcal{Q}$ is the collection of sets which are complete in a certain sense, but $G$ is too large to be complete (such as for example bounded uo completeness of a Banach lattice, see \cite[Theorem 2.6]{taylor}).
\qed\end{remark}

\begin{remark}
Note that it is possible to obtain analogues of the result of this section at the level of monoids. However, we cannot generate a pseudo-metric from a pseudo-norm, due to lack of subtraction. Hence, a pseudo-norm does not generate a topology on a monoid, instead it generates a pre-topology at $0_{G}$, and so instead of working with topologies, the ``correct'' setting for monoids are additive pre-topologies at $0_{G}$. For more details see \cite{erzh}.
\qed\end{remark}

\section{Linear topologies}\label{vs}

Let $E$ be a vector space over $\R$. A set $A\subset G$ is called \emph{balanced} if $rA\subset A$, for $\left|r\right|\le 1$. Every balanced set is symmetric. We call a hull structure $\mathcal{Q}$ on $G$ \emph{contraction invariant} if the map of the form $e\mapsto re$ is $\mathcal{Q}$-continuous, for every $\left|r\right|\le 1$. This is equivalent to the fact that $Q\in\mathcal{Q}$, $\left|r\right|\ge 1$ $\Rightarrow$ $rQ\in\mathcal{Q}$ implying that $\mathcal{Q}$ is symmetric. Moreover, if $\mathcal{B}$ is the hull structure of balanced sets, and $\mathcal{Q}$ is contraction invariant, then $Q_{\mathcal{B}}=\bigcap\limits_{\left|r\right|\ge 1}rQ\in\mathcal{Q}$, for every $0_{E}\in Q\in\mathcal{Q}$. Thus, $\mathcal{Q}$ is $\mathcal{B}$-enhancible.

\begin{proposition}\label{balt}
Assume that $\mathcal{Q}$ is a contraction invariant hull structure on $G$. For an additive $\mathcal{Q}$-topology $\tau$ on $E$ the following conditions are equivalent:
\item[(i)] $\tau$ is a $\mathcal{B}$-topology;
\item[(ii)] $\tau$ is a $\mathcal{Q}\cap \mathcal{B}$-topology;
\item[(iii)] If $e_{p}\xrightarrow[]{\tau}0_{E}$, and $\left|r_{p}\right|\le 1$, for every $p$, then $r_{p}e_{p}\xrightarrow[]{\tau}0_{E}$;
\item[(iv)] If $e_{p}\xrightarrow[]{\tau}0_{E}$, and $\left\{r_{p}\right\}_{p\in P}\subset \R$ is bounded, then $r_{p}e_{p}\xrightarrow[]{\tau}0_{E}$.
\end{proposition}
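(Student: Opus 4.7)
The plan is to record two preparatory facts about $\mathcal{B}$ and then reduce each equivalence to a result proven earlier in the paper. First I would note that $\mathcal{B}$ is $1$-algebraic, since the union of any family of balanced sets is balanced, and that for every $x \in E$ the smallest balanced set containing $x$ is $\{rx : |r|\le 1\}$, so $\{x\}^{\mathcal{B}} = \{rx : |r|\le 1\}$. The paragraph preceding the proposition already establishes that contraction invariance of $\mathcal{Q}$ forces $\mathcal{Q}$ to be symmetric and $\mathcal{B}$-enhancible, with $Q_{\mathcal{B}} = \bigcap_{|r|\ge 1} rQ$, so no further compatibility lemma between $\mathcal{Q}$ and $\mathcal{B}$ is required.

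The equivalence (i)$\Leftrightarrow$(ii) is then immediate: (ii)$\Rightarrow$(i) holds because any $\mathcal{Q}\cap\mathcal{B}$-basis is a fortiori a $\mathcal{B}$-basis, while (i)$\Rightarrow$(ii) is the precise content of Proposition \ref{enhancible}(i) applied to the pair $\mathcal{Q}, \mathcal{B}$, using the standing assumption that $\tau$ is a $\mathcal{Q}$-topology. The equivalence (i)$\Leftrightarrow$(iii) follows directly from Proposition \ref{1ac} applied to the $1$-algebraic structure $\mathcal{B}$: the condition ``$x_p \xrightarrow[]{\tau} 0_E$ implies $y_p \xrightarrow[]{\tau} 0_E$ for every $y_p \in \{x_p\}^{\mathcal{B}}$'' unpacks, via the formula $\{x_p\}^{\mathcal{B}} = \{r x_p : |r|\le 1\}$, to exactly (iii).

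For (iii)$\Leftrightarrow$(iv), the implication (iv)$\Rightarrow$(iii) is the obvious specialization to $1$-bounded scalars. For the converse, suppose $e_p \xrightarrow[]{\tau} 0_E$ and $|r_p|\le M$ for all $p$. Pick any integer $n \ge M$, so $|r_p/n|\le 1$; then (iii) yields $(r_p/n)e_p \xrightarrow[]{\tau} 0_E$. Since $\tau$ is additive, the $n$-fold sum map is continuous at the origin (by induction from continuity of $+$ at $(0_E, 0_E)$), so
\[
r_p e_p \;=\; \underbrace{(r_p/n)e_p + \cdots + (r_p/n)e_p}_{n \text{ times}} \;\xrightarrow[]{\tau}\; 0_E.
\]

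The main obstacle is really just this final rescaling: we do not a priori have continuity of scalar multiplication by $n$ (that would be a genuine consequence of condition (iv), not a hypothesis), but as iterated addition it is supplied by additivity of $\tau$. Every other step is a mechanical application of Propositions \ref{1ac} and \ref{enhancible} together with the structural facts about $\mathcal{B}$ noted at the outset.
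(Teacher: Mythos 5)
Your proof is correct and follows essentially the same route as the paper's: (i)$\Leftrightarrow$(iii) via Proposition \ref{1ac} together with the identification $\{x\}^{\mathcal{B}}=\{rx:|r|\le 1\}$, and (iii)$\Rightarrow$(iv) by writing $r_pe_p$ as an $n$-fold sum of $(r_p/n)e_p$ and using additivity. If anything, your citation of Proposition \ref{enhancible}(i) for (i)$\Rightarrow$(ii) is the more accurate one, since the paper's pointer to part (ii) of Proposition \ref{1a} appears to be a slip.
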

\begin{proof}
(i)$\Leftrightarrow$(ii) follow from part (ii) of Proposition \ref{1a}. (i)$\Leftrightarrow$(iii) follow from the comment before the proposition and Proposition \ref{1ac}.

(iv)$\Rightarrow$(iii) is trivial. For the converse find $n\in\N$ be such that $\left|r_{p}\right|\le n$, for every $n\in\N$; we have $\frac{r_{p}}{n}e_{p}\xrightarrow[]{\tau}0_{E}$, hence $r_{p}e_{p}=\frac{r_{p}}{n}e_{p}+...+\frac{r_{p}}{n}e_{p}\xrightarrow[]{\tau}0_{E}$.
\end{proof}

We will call $\tau$ \emph{Archimedean} if $\frac{1}{n}e\xrightarrow[]{\tau} 0_{E}$, for every $e\in E$, and \emph{linear} if it is an additive topology such the scalar multiplication is a continuous map from $\R\times E$ into $E$. Clearly, every linear topology is an Archimedean group topology.

\begin{proposition}\label{arb}
A topology $\tau$ on $E$ is linear if and only if it is an additive Archimedean $\mathcal{B}$-topology.
\end{proposition}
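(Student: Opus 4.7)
The strategy is to prove both directions by combining the characterizations from Proposition \ref{balt} with the standard algebraic decomposition $r_{p}e_{p}-re=r_{p}\left(e_{p}-e\right)+\left(r_{p}-r\right)e$, which reduces joint continuity of scalar multiplication to two separate convergence statements.

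\emph{Necessity.} If $\tau$ is linear, then additivity is immediate. Archimedean-ness follows by applying continuity of scalar multiplication at the point $\left(0,e\right)$ to the sequence $\frac{1}{n}\to 0$. For the $\mathcal{B}$-topology part I would use the familiar construction: given $U\in\tau_{0}$, continuity of scalar multiplication at $\left(0,0_{E}\right)$ yields $\delta>0$ and $V\in\tau_{0}$ with $rV\subset U$ whenever $\left|r\right|\le\delta$; then $W:=\bigcup_{0<\left|r\right|\le\delta}rV$ is open (each $rV$ with $r\neq 0$ is open via the homeomorphism $e\mapsto re$), contains $0_{E}$, lies inside $U$, and is visibly balanced.

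\emph{Sufficiency.} Assume $\tau$ is an additive Archimedean $\mathcal{B}$-topology and take nets $r_{p}\to r$ in $\R$ and $e_{p}\xrightarrow[]{\tau}e$; I aim for $r_{p}e_{p}\xrightarrow[]{\tau}re$. By additivity of $\tau$ it suffices to show that each summand in the decomposition above converges to $0_{E}$. The first summand is handled by part (iv) of Proposition \ref{balt}, since $\left\{r_{p}\right\}$ is eventually bounded (because $r_{p}\to r$) and $e_{p}-e\xrightarrow[]{\tau}0_{E}$. For the second summand I would fix $U\in\tau_{0}$, pick a balanced $V\in\tau_{0}\cap\mathcal{B}$ with $V\subset U$, and use the Archimedean hypothesis to find $n\in\N$ with $\frac{1}{n}e\in V$; then for $\left|s\right|\le\frac{1}{n}$ we get $se=\left(sn\right)\cdot\frac{1}{n}e\in V$ since $\left|sn\right|\le 1$ and $V$ is balanced. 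As $\left|r_{p}-r\right|\le\frac{1}{n}$ eventually, the second summand lies in $V\subset U$ eventually.

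\emph{Main obstacle.} The only step that requires any real thought is the passage, inside the sufficiency proof, from the Archimedean statement about the integer sequence $\frac{1}{n}\to 0$ to the continuous statement $s_{p}e\to 0_{E}$ for an arbitrary scalar net $s_{p}\to 0$; this is precisely what balancedness of the neighborhood base buys us, interpolating between the discrete scales $\frac{1}{n}$ and general small scalars. Everything else reduces to formal manipulation once Proposition \ref{balt} is invoked.
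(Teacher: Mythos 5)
Your proof is correct and follows essentially the same route as the paper's: the balanced neighborhood $\bigcup_{0<\left|r\right|\le\delta}rV$ is the paper's $\left(-r,r\right)V$, and the sufficiency argument (Archimedean plus balancedness gives continuity of $r\mapsto re$ at $0$, Proposition \ref{balt}(iv) handles $r_{p}\left(e_{p}-e\right)$, then the decomposition $r_{p}e_{p}-re=r_{p}\left(e_{p}-e\right)+\left(r_{p}-r\right)e$) is identical. The only cosmetic point is that $W$ need not be open unless you take $V$ open; it suffices that $W$ contains the neighborhood $\delta V$ of $0_{E}$, which is how the paper phrases it.
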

\begin{proof}
Necessity: We only need to show local balanced-ness. Let $U\in\tau_{0}$. Due to continuity of the scalar multiplication that there are $V\in\tau_{0}$ and $r>0$ such that $\left(-r,r\right)V\subset U$. Since $e\mapsto \frac{r}{2}e$ is a homeomorphism, it follows that $\frac{r}{2}V\in\tau_{0}$. It is easy to see that $\left(-r,r\right)V$ is balanced; since it contains $\frac{r}{2}V$, it is a neighborhood of $0_{E}$.\medskip

Sufficiency: Let $U\in\tau_{0}$ and let $V\in\tau_{0}\cap\mathcal{B}$ be such that $V\subset U$. Fix $e\in E$. As $\tau$ is Archimedean, it follows that there is $n\in\N$ such that $\frac{1}{n}e\in V$, but since the latter is balanced we have  $\left(-\frac{1}{n},\frac{1}{n}\right)e\subset V\subset U$. Due to arbitrariness of $U$ it follows that the map $r\mapsto re$ is continuous at $0$. Next, suppose that $e_{p}\xrightarrow[]{\tau}e$ and $r_{p}\xrightarrow[]{\tau}r$. In particular, by passing to a tail we may assume that $\left\{r_{p}\right\}_{p\in P}\subset \R$ is bounded, and so according to Proposition \ref{balt} we get $r_{p}\left(e_{p}-e\right)\xrightarrow[]{\tau}0_{E}$. On the other hand, by the previous step we have $\left(r_{p}-r\right)e\xrightarrow[]{\tau}0_{E}$. We conclude that $r_{p}e_{p}-re=r_{p}\left(e_{p}-e\right)+\left(r_{p}-r\right)e\xrightarrow[]{\tau}0_{E}$.
\end{proof}

We call a pseudo-norm $\rho$ on $G$ \emph{balanced} if $\rho\left(r e\right)\le \rho\left(e\right)$, for every $e\in E$ and $\left|r\right|\le 1$. Note that $\rho$ is balanced iff it is a $\mathcal{B}$-pseudo-norm. In this case $\rho$ is symmetric, and $\ker\rho$ and $\rho^{-1}\left(\R\right)$ are subspaces of $E$. We also call $\rho$ \emph{Archimedean} if $\rho\left(\frac{1}{n} e\right)\to 0$, for every $e\in E$. If $\rho$ is Archimedean, then so is $\rho\wedge \1$. If $\rho$ is continuous with respect to a linear topology then it is Archimedean.

\begin{proposition}\label{bal}
For a balanced pseudo-norm $\rho$ on $E$ the following conditions are equivalent:
\item[(i)] $\rho$ generates a linear topology;
\item[(ii)] $\rho$ is Archimedean;
\item[(iii)] $\rho$-balls at $0_{E}$ are absorbing.
\end{proposition}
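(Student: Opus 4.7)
The plan is to run the cycle (i)$\Rightarrow$(ii)$\Rightarrow$(iii)$\Rightarrow$(i), with the last implication appealing to Proposition \ref{arb}. All three steps look routine; the main conceptual point is to observe that balanced-ness of $\rho$ converts pointwise Archimedean-ness into the absorbing property of the balls, and vice versa.

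For (i)$\Rightarrow$(ii), if the topology $\tau_\rho$ generated by $\rho$ is linear, then scalar multiplication at $0_E$ is continuous, so $\frac{1}{n}e\to 0_E$ in $\tau_\rho$ for every $e\in E$; by Proposition \ref{psco} this is the same as $\rho(\frac{1}{n}e)\to 0$, which is precisely the Archimedean property of $\rho$.

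For (ii)$\Rightarrow$(iii), fix $e\in E$ and $\varepsilon>0$. Archimedean-ness gives $n\in\N$ with $\rho(\frac{1}{n}e)<\varepsilon$. Since $\rho$ is balanced, for every $|r|\le 1$ we have $\rho(\frac{r}{n}e)\le \rho(\frac{1}{n}e)<\varepsilon$, so $se\in \rho^{-1}[0,\varepsilon)$ for all $|s|\le\frac{1}{n}$. Hence the open $\rho$-ball at $0_E$ of radius $\varepsilon$ is absorbing; the same argument works for closed balls.

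For (iii)$\Rightarrow$(i), note first that since $\rho$ is balanced, every $\rho$-ball at $0_E$ lies in $\mathcal{B}$, so $\tau_\rho$ is an additive $\mathcal{B}$-topology. By Proposition \ref{arb} it suffices to check that $\tau_\rho$ is Archimedean, i.e. $\frac{1}{n}e\xrightarrow{\tau_\rho} 0_E$ for every $e\in E$. Fix $e\in E$ and $\varepsilon>0$; applying (iii) to the ball $\rho^{-1}[0,\varepsilon)$ we obtain $s>0$ such that $re\in \rho^{-1}[0,\varepsilon)$ whenever $|r|\le s$, so $\rho(\frac{1}{n}e)<\varepsilon$ for all $n\ge \frac{1}{s}$. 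Since $\varepsilon$ was arbitrary, $\rho(\frac{1}{n}e)\to 0$, i.e. $\frac{1}{n}e\xrightarrow{\tau_\rho} 0_E$ by Proposition \ref{psco}, completing the proof.
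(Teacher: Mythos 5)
Your proposal is correct and follows essentially the same route as the paper: the easy implications are handled directly, and the substantive step (getting linearity) is reduced to Proposition \ref{arb} by observing that a balanced Archimedean pseudo-norm generates an additive Archimedean $\mathcal{B}$-topology. The only cosmetic difference is that you close the cycle via (iii)$\Rightarrow$(i) where the paper uses (ii)$\Rightarrow$(i), which is immaterial given the immediate equivalence of (ii) and (iii).
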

\begin{proof}
(i)$\Rightarrow$(ii)$\Leftrightarrow$(iii) are straightforward.

(ii)$\Rightarrow$(i): Since $\rho$ is balanced and Archimedean it generates an Archimedean $\mathcal{B}$-topology. Such topology is linear according to Proposition \ref{arb}.
\end{proof}

If $\rho$ is a pseudo-norm, for $\left|r\right|\le 1$ define a pseudo-norm $\rho_{r}$ by $\rho_{r}\left(e\right):=\rho\left(re\right)$. Then, $\rho_{\mathcal{B}}:=\bigvee\limits_{\left|r\right|\le 1}\rho_{r}$ is a balanced pseudo-norm. The following result is partially taken from \cite[Chapter 26]{schechter}.

\begin{proposition}For a symmetric pseudo-norm $\rho$ on $E$ the following conditions are equivalent:
\item[(i)] $\rho$ is equivalent to a balanced Archimedean pseudo-norm;
\item[(ii)] $\rho$ is a generates a linear topology;
\item[(iii)] If $\left(e_{p}\right)_{p\in P}\subset E$, $e\in E$, $\left(r_{p}\right)_{p\in P}\subset \R$ and $r\in\R$ are such that $\rho\left(e_{p}-e\right)\to 0$ and $\left|r_{p}-r\right|\to 0$, it follows that $\rho\left(r_{p}e_{p}-re\right)\to 0$;
\item[(iv)] Whenever $\rho\left(e_{n}\right)\to 0$, $r_{n}\to 0$, $e\in E$ and $r\in\R$, it follows that $\rho\left(r e_{n}\right)\to 0$ and $\rho\left(r_{n}e\right)\to 0$.
\end{proposition}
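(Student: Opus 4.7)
My plan is to establish the cycle (i)$\Rightarrow$(ii)$\Rightarrow$(iii)$\Rightarrow$(iv)$\Rightarrow$(i). The first three implications are short. For (i)$\Rightarrow$(ii), if $\rho$ is equivalent to a balanced Archimedean pseudo-norm $\lambda$, then by Proposition~\ref{bal} $\lambda$ generates a linear topology, which is therefore also the topology generated by $\rho$. For (ii)$\Rightarrow$(iii), the continuity of scalar multiplication $\R\times E\to E$ in a linear topology, read through the pseudo-metric $d_{\rho}$, is exactly the net statement in (iii). For (iii)$\Rightarrow$(iv), specialize: take $r_{p}:=r$ constant with $e:=0_{E}$ for the first conclusion, and $e_{p}:=e$ constant with $r:=0$ for the second.

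The substantive direction is (iv)$\Rightarrow$(i). The plan is to construct a balanced Archimedean pseudo-norm $\lambda$ equivalent to $\rho$ as the balanced envelope. After replacing $\rho$ with the equivalent bounded pseudo-norm $\rho\wedge\1$, set
\[
\lambda(e) := \bigvee_{|s|\le 1}(\rho\wedge\1)(se).
\]
As a pointwise supremum of the symmetric pseudo-norms $e\mapsto(\rho\wedge\1)(se)$, $\lambda$ is symmetric and subadditive; it is $[0,1]$-valued and balanced by construction, and dominates $\rho\wedge\1$ (take $s=1$), so the $\lambda$-topology refines the $\rho$-topology. For the Archimedean property, note that for each fixed $e$ the function $s\mapsto\rho(se)$ is subadditive in $s$ and, by the second half of (iv), continuous at $s=0$; since a subadditive function on $\R$ continuous at $0$ is continuous throughout $\R$, we have $\sup_{|s|\le 1/n}\rho(se)\to 0$, whence $\lambda(e/n)\to 0$. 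The same continuity on the compact interval $[-1,1]$ also ensures that $\lambda$ is finite-valued.

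The main obstacle is the reverse inclusion of topologies, equivalently, that $\lambda$ is continuous at $0_{E}$ with respect to $\rho$: whenever $\rho(e_{n})\to 0$, we need $\lambda(e_{n})=\sup_{|s|\le 1}\rho(se_{n})\to 0$. The natural attack is by contradiction: select $s_{n}\in[-1,1]$ with $\rho(s_{n}e_{n})\ge\varepsilon$ and extract a subsequence with $s_{n}\to s^{*}\in[-1,1]$; subadditivity gives $\rho(s_{n}e_{n})\le \rho(s^{*}e_{n})+\rho((s_{n}-s^{*})e_{n})$, and the first summand vanishes by the first half of (iv) with the fixed scalar $s^{*}$. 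The residual difficulty, which is the heart of the argument, is the sequential joint continuity of scalar multiplication at $(0,0_{E})$: $\rho(t_{n}e_{n})\to 0$ whenever $t_{n}\to 0$ and $\rho(e_{n})\to 0$. This must be extracted from the combination of both halves of (iv), the subadditivity of $\rho$, and the continuity of $s\mapsto\rho(se)$ on compact intervals, exploiting the first-countability of the $\rho$-generated topology to reduce everything to sequences. Once this is in place, $\lambda$ is a balanced Archimedean pseudo-norm equivalent to $\rho$, giving (i).
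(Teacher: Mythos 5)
Your cycle and your construction of the balanced envelope $\lambda=\bigvee_{|s|\le 1}(\rho\wedge\1)(s\,\cdot)$ coincide with the paper's route, and the easy implications (i)$\Rightarrow$(ii)$\Rightarrow$(iii)$\Rightarrow$(iv), the Archimedean property of $\lambda$, and the setup of the contradiction argument are all fine. But there is a genuine gap exactly where you flag ``the heart of the argument'': you assert that the joint sequential continuity $\rho(t_{n}e_{n})\to 0$ whenever $t_{n}\to 0$ and $\rho(e_{n})\to 0$ ``must be extracted from'' the two halves of (iv), subadditivity, and continuity of $s\mapsto\rho(se)$, but you never extract it. This is not a routine deduction: (iv) gives only \emph{separate} continuity, i.e.\ pointwise convergence $g_{n}(s):=\rho(se_{n})\to 0$ for each fixed $s$ together with continuity of each $g_{n}$, and for a sequence of continuous subadditive functions vanishing pointwise one cannot in general conclude $g_{n}(t_{n})\to 0$ along $t_{n}\to 0$ without some uniformity in $n$; first-countability of the $\rho$-topology does not supply that uniformity.

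The paper closes precisely this step with a category argument: each $e_{n}$ is viewed as a continuous linear map $\R\to E$, $s\mapsto se_{n}$ (continuity coming from the second half of (iv) plus subadditivity); the first half of (iv) makes the family $\{e_{n}\}$ pointwise bounded on the Baire topological vector space $\R$, hence equicontinuous by the uniform boundedness principle, which yields a single $\delta>0$ with $\rho(se_{n})<\varepsilon$ for all $n$ whenever $|s|<\delta$, and therefore $\rho(t_{n}e_{n})\to 0$. To complete your proof you need to supply this Baire/Banach--Steinhaus argument (or an equivalent substitute); as written, the one substantive lemma of the whole proposition is named but not proved.
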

\begin{proof}
(i)$\Rightarrow$(ii) follows from Proposition \ref{bal}. (ii)$\Rightarrow$(iii) is straightforward. (iii)$\Rightarrow$(iv) is trivial.

(iv)$\Rightarrow$(i): First, let us prove that if $\rho\left(e_{n}\right)\to 0$ and $r_{n}\to 0$, then $\rho\left(r_{n} e_{n}\right)\to 0$. For every $n\in\N$ we may view $e_{n}$ as a continuous linear map from $\R$ into $E$ defined by $e_{n}\left(s\right):=se_{n}$. For every $s\in\R$ we have that $\rho\left(se_{n}\right)\to 0$, and so in particular $\left\{se_{n}\right\}_{n\in\N}$ is bounded with respect to $\rho$-topology. Hence, $\left\{e_{n}\right\}_{n\in\N}$ is a a pointwise bounded collection of continuous linear maps on a Baire topological vector space $\R$, and so is equicontinuous (see \cite[Theorem 11.9.5]{bn}). Hence, for every $\varepsilon>0$ there is $\delta>0$ such that whenever $\left|s\right|<\delta$, then $\rho\left(se_{n}\right)<\varepsilon$, for every $n\in\N$. In particular, if $r_{n}\to 0$, we have $\rho\left(r_{n}e_{n}\right)<\varepsilon$, for large enough $n$.\medskip

Next, since $\rho$ is equivalent to $\rho\wedge\1$ we may assume that $\rho$ is $\left[0,1\right]$-valued. We have that $\rho_{\mathcal{B}}\ge\rho$ is a balanced pseudo-norm. To show that $\rho_{\mathcal{B}}$ is $\rho$-continuous assume the opposite, and so according to Proposition \ref{pscont} there are $\varepsilon>0$ and a sequence $\left(e_{n}\right)_{n\in\N}$ such that $\rho\left(e_{n}\right)\to 0$, and $\rho_{\mathcal{B}}\left(e_{n}\right)>\varepsilon$, for every $n\in\N$. Then, for every $n\in\N$ there is $r_{n}\in\left[-1,1\right]$ such that $\rho\left(r_{n}e_{n}\right)>\varepsilon$. Passing to a subsequence, we may assume that $r_{n}\to r$, and so by the first step we have $\varepsilon<\rho\left(r_{n}e_{n}\right)\le \rho\left(\left(r_{n}-r\right)e_{n}\right)+\rho\left(re_{n}\right)\to 0$. Contradiction.
\end{proof}

If $\mathcal{Q}$ is a contraction invariant hull structure, and $\rho$ is a $\mathcal{Q}$-pseudo-norm, then $\rho_{r}$ a $\mathcal{Q}$-pseudo-norms, for every $r\in\left[-1,1\right]$, which implies that $\rho_{\mathcal{B}}$ is a $\mathcal{B}\cap\mathcal{Q}$-pseudo-norm. Moreover, it follows from part (ii) of Proposition \ref{1a} and Proposition \ref{psco} that if
$\tau$ is a group $\mathcal{B}$-topology and $\rho$ is a $\tau$-continuous pseudo-norm, then $\rho_{\mathcal{B}}$ is also $\tau$-continuous. We can now establish the analogue of Theorem \ref{generated} for linear topologies (we only state the analogue of part (i); the formulations and the proofs of the other parts are similar).

\begin{proposition}\label{lenerated}
Assume that we are given the following data:
\begin{itemize}
\item Contraction invariant hull structures $\mathcal{Q}$ and $\mathcal{R}$ on $G$ such that $\mathcal{R}$ is basic, and $\mathcal{Q}$ is $\mathcal{R}$-enhancible and either additive or translation invariant;
\item A linear topology $\tau$ on $E$ which is a $\mathcal{Q}$-topology and a $\mathcal{R}$-topology.
\end{itemize}
Then: $\tau$ is generated by a collection of balanced $\tau$-continuous $\left[0,1\right]$-valued $\mathcal{Q}\cap\mathcal{R}$-pseudo-norms.
\end{proposition}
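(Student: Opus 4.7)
The plan is to apply Theorem \ref{generated}(i) to extract a generating family of symmetric $\mathcal{Q}\cap\mathcal{R}$-pseudo-norms, then convert each into a balanced one via the construction $\rho\mapsto\rho_{\mathcal{B}}$ described in the paragraph preceding the proposition. Since every contraction invariant hull structure is symmetric, and since every linear topology is in particular a group topology, the hypotheses of Theorem \ref{generated}(i) are satisfied. It therefore produces a collection $\left\{\rho_{i}\right\}_{i\in I}$ of symmetric $\tau$-continuous $\left[0,1\right]$-valued $\mathcal{Q}\cap\mathcal{R}$-pseudo-norms generating $\tau$.

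Next, I would replace each $\rho_{i}$ by $\lambda_{i}:=\left(\rho_{i}\right)_{\mathcal{B}}\wedge\1$. Because $\tau$ is linear, Proposition \ref{arb} tells us that $\tau$ is a $\mathcal{B}$-topology; because $\mathcal{Q}$ and $\mathcal{R}$ are contraction invariant, so is $\mathcal{Q}\cap\mathcal{R}$. The discussion preceding the proposition then gives three things at once: $\left(\rho_{i}\right)_{\mathcal{B}}$ is balanced, it is a $\mathcal{B}\cap\mathcal{Q}\cap\mathcal{R}$-pseudo-norm (in particular a $\mathcal{Q}\cap\mathcal{R}$-pseudo-norm), and it is $\tau$-continuous because $\tau$ is a group $\mathcal{B}$-topology and $\rho_{i}$ was $\tau$-continuous. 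Truncating by $\1$ preserves each of these properties while keeping values in $\left[0,1\right]$.

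Finally, I would verify that $\left\{\lambda_{i}\right\}_{i\in I}$ still generates $\tau$. Each $\lambda_{i}$ is $\tau$-continuous, so the generated topology is at most $\tau$. Conversely, from $\lambda_{i}\ge \rho_{i}\wedge\1$ and the fact that $\rho_{i}$ is equivalent to $\rho_{i}\wedge\1$, Proposition \ref{pscont} shows that each $\rho_{i}$ is $\lambda_{i}$-continuous. Hence the topology generated by $\left\{\lambda_{i}\right\}_{i\in I}$ is at least as strong as the topology generated by $\left\{\rho_{i}\right\}_{i\in I}$, which is $\tau$. Combining the two inequalities gives the conclusion.

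I expect no conceptual obstacle; the only thing to check carefully is the bookkeeping claim that the class of $\mathcal{Q}\cap\mathcal{R}$-pseudo-norms is closed under the operations $\rho\mapsto\rho_{r}$ for $\left|r\right|\le 1$ and pointwise supremum, so that $\left(\rho_{i}\right)_{\mathcal{B}}=\bigvee_{\left|r\right|\le 1}\left(\rho_{i}\right)_{r}$ genuinely remains in the class. This uses contraction invariance of $\mathcal{Q}\cap\mathcal{R}$ and the stability of lower $\mathcal{Q}\cap\mathcal{R}$-continuous functions under suprema, both of which are either explicit or routine from Section \ref{hull}. All the substantive work (constructing pseudo-norms from strings, handling $\mathcal{R}$-enhancibility, symmetrization) has already been done inside Theorem \ref{generated}.
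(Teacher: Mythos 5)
Your argument is correct, but it takes a different route from the paper. The paper's proof folds the balanced structure into the \emph{basic} hull structure before invoking Theorem \ref{generated}: it observes that $\mathcal{R}\cap\mathcal{B}$ is again basic, that $\mathcal{Q}$ is $\mathcal{R}\cap\mathcal{B}$-enhancible by part (ii) of Proposition \ref{enhancible} (contraction invariance gives $\mathcal{B}$-enhancibility), and that $\tau$ is an $\mathcal{R}\cap\mathcal{B}$-topology by Proposition \ref{balt}; a single application of Theorem \ref{generated}(i) then yields $\mathcal{Q}\cap\mathcal{R}\cap\mathcal{B}$-pseudo-norms, which are balanced by definition. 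You instead apply Theorem \ref{generated}(i) with the original data and balance the output a posteriori via $\rho\mapsto\rho_{\mathcal{B}}=\bigvee_{\left|r\right|\le 1}\rho_{r}$, which is exactly the machinery the paper sets up in the paragraph preceding the proposition. Each of your supporting claims checks out: $\mathcal{Q}\cap\mathcal{R}$ is contraction invariant, so $\rho_{\mathcal{B}}$ is a $\mathcal{B}\cap\mathcal{Q}\cap\mathcal{R}$-pseudo-norm; $\tau$ is a group $\mathcal{B}$-topology by Proposition \ref{arb}, so $\rho_{\mathcal{B}}$ stays $\tau$-continuous; and since $\rho_{\mathcal{B}}\ge\rho$ (take $r=1$) the regularized family generates a topology at least as strong as the original, hence equal to $\tau$. (The truncation by $\1$ is actually redundant, since $\rho$ being $\left[0,1\right]$-valued already forces $\rho_{\mathcal{B}}\le\1$.) What the two approaches buy: the paper's reduction mirrors the way $\mathcal{S}$ is absorbed inside Theorem \ref{generated} itself and produces balanced pseudo-norms straight out of the string construction, at the cost of verifying enhancibility for the enlarged basic structure; yours avoids that verification but adds the (easy) post-processing and equivalence check. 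Both are legitimate one-paragraph proofs.
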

\begin{proof}
First, it is easy to see that $\mathcal{R}\cap\mathcal{B}$ is a basic hull structure; moreover by part (ii) of Proposition \ref{enhancible}, $\mathcal{Q}$ is $\mathcal{R}\cap\mathcal{B}$-enhancible. By Proposition \ref{balt} $\tau$ is a $\mathcal{R}\cap\mathcal{B}$ topology. By part (i) of Theorem \ref{generated}, $\tau$ is generated by a collection of $\tau$-continuous $\left[0,1\right]$-valued $\mathcal{Q}\cap\mathcal{R}\cap\mathcal{B}$-pseudo-norms.
\end{proof}

We now consider the locally convex case which is much simpler and does not require most of the theory developed here. A \emph{semi-norm} is a positively homogeneous symmetric pseudo-norm. Every semi-norm is balanced and if it is real-valued, then it is Archimedean. The pointwise supremum of any collection of semi-norms is a semi-norm. Note that being a semi-norm is a strictly stronger condition than being a a balanced Archimedean $\mathcal{Q}$-pseudo-norm, for $\mathcal{Q}$ being the hull structure of convex sets, take  $\rho$ on $\R$ defined by $\rho\left(r\right)=\left|r\right|\wedge 1$. It is not hard to show that being a semi-norm cannot be described as a lower continuity with respect to a hull structure. Semi-norms generate locally convex topologies, since the balls with respect to semi-norms are convex. We will now explore the converse.

Consider the standard construction of semi-norms. For a balanced $A\subset E$, define the \emph{gauge} of $A$ by $\rho_{A}\left(e\right):=\bigwedge\left\{r>0,~ e\in rA\right\}$. It is easy to see that $\rho_{A}$ is symmetric and positively homogeneous. If $A$ is convex, then $\rho_{A}$ is subadditive: for every $r>\rho_{A}\left(e\right)$ and $s>\rho_{A}\left(f\right)$ we have that $e+f\in rA+sA\subset \left(r+s\right)A$, where the last inclusion follows from convexity and balancedness. Also, note that $\B_{\rho_{A}}\subset A\subset \Bbo_{\rho_{A}}=\bigcap\limits_{r>1}rA$. If $\tau$ is an additive topology, then according to Proposition \ref{psco} $\rho_{A}$ is $\tau$-continuous iff $A\in\tau_{0}$. If $A\in\mathcal{Q}$, and $\mathcal{Q}$ is \emph{homothety invariant} ($Q\in\mathcal{Q}$ $\Rightarrow$ $rQ\in\mathcal{Q}$), then $\rho_{A}$ is a $\mathcal{Q}$-pseudo-norm.

\begin{proposition}\label{convex}
Assume that $\mathcal{Q}$ is a homothety invariant hull structure on $E$ which is stable with respect to taking convex hulls. Let $\tau$ be a locally convex linear $\mathcal{Q}$-topology on $E$. Then, $\tau$ is generated by a collection of $\tau$-continuous $\mathcal{Q}$-seminorms on $E$.
\end{proposition}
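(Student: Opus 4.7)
\emph{Overall plan.} My strategy is to produce a neighborhood base at $0_{E}$ consisting of sets which are simultaneously in $\mathcal{Q}$, convex, and balanced, and then let the corresponding Minkowski gauges serve as the generating seminorms. Write $\mathcal{C}$ for the hull structure of convex sets. The argument naturally splits into three stages: combining the $\mathcal{Q}$-topology property with local convexity, then with local balancedness, and finally invoking the gauge construction already prepared in the text preceding the statement.

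\emph{Assembling the hull structures.} By Proposition \ref{arb} the linear topology $\tau$ is an additive Archimedean $\mathcal{B}$-topology. The hypothesis that $\mathcal{Q}$ is stable under convex hulls says exactly that $Q^{\mathcal{C}}\in\mathcal{Q}$ for every $Q\in\mathcal{Q}$, which is precisely the hypothesis of Remark \ref{hn} (applied with $\mathcal{R}=\mathcal{C}$); hence $\tau$ is a $\mathcal{Q}\cap\mathcal{C}$-topology. Next I would verify that $\mathcal{Q}\cap\mathcal{C}$ is $\mathcal{B}$-enhancible: for any $0_{E}\in Q\in\mathcal{Q}\cap\mathcal{C}$, the balanced core $Q_{\mathcal{B}}=\bigcap_{\left|r\right|\ge 1}rQ$ lies in $\mathcal{Q}$ by homothety invariance together with closedness under intersections, and is convex as an intersection of convex sets. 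Proposition \ref{enhancible}(i), applied to $\mathcal{Q}\cap\mathcal{C}$ and $\mathcal{B}$, then upgrades $\tau$ to a $\mathcal{Q}\cap\mathcal{C}\cap\mathcal{B}$-topology.

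\emph{Minkowski gauges.} For any $A\in\tau_{0}\cap\mathcal{Q}\cap\mathcal{C}\cap\mathcal{B}$ the gauge $\rho_{A}$ is symmetric and positively homogeneous since $A$ is balanced, and subadditive since $A$ is convex, so $\rho_{A}$ is a semi-norm. Homothety invariance of $\mathcal{Q}$ makes $\rho_{A}$ a $\mathcal{Q}$-pseudo-norm (as observed in the paragraph preceding the proposition), and the same paragraph records that $A\in\tau_{0}$ forces $\tau$-continuity of $\rho_{A}$ via Proposition \ref{psco}. Finally, the inclusion $\B_{\rho_{A}}\subset A$ guarantees that the open balls of these seminorms form a base at $0_{E}$, so the family $\{\rho_{A}\}$ generates exactly $\tau$.

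The main ``obstacle'' is really just the careful bookkeeping of the three hull structures $\mathcal{Q}$, $\mathcal{C}$, $\mathcal{B}$; once a base of balanced convex $\mathcal{Q}$-neighborhoods is in hand, the classical gauge construction worked out in the preceding paragraph produces the seminorms essentially for free, which is why the statement requires none of the machinery of Theorem \ref{generated}.
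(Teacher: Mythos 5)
Your proof is correct and follows essentially the same route as the paper: both reduce to producing a base at $0_{E}$ of convex balanced $\mathcal{Q}$-neighborhoods and then take the Minkowski gauges, which are $\tau$-continuous $\mathcal{Q}$-seminorms by the preceding paragraph. The only (immaterial) difference is in how that base is obtained: the paper forms $\conv\left(U\cap-U\right)$ in one step for $U\in\tau_{0}\cap\mathcal{Q}$, citing Remark \ref{hn}, while you first pass to convex hulls via Remark \ref{hn} and then to balanced cores via Proposition \ref{enhancible}(i); both constructions are licensed by homothety invariance and stability under convex hulls.
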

\begin{proof}
Let $\mathcal{U}\subset\mathcal{Q}$ be a base for $\tau_{0}$. For $U\in\mathcal{U}$ define $U^{*}:=\conv\left(U\cap-U\right)\in\mathcal{Q}$. As $\tau$ is locally convex and linear, $\left\{U^{*}\right\}_{U\in\mathcal{U}}$ is a base for $\tau_{0}$ (see Remark \ref{hn}). Then, $\left\{\rho_{U^{*}}\right\}_{U\in\mathcal{U}}$ is a collection of $\tau$-continuous $\mathcal{Q}$-seminorms which generate $\tau$.
\end{proof}

Note that other parts of Theorem \ref{generated} also hold for the locally convex case.

\section{Applications}\label{ex}

Let $L$ be a $\vee$-semi-lattice with the smallest element $0_{L}$. We call an order preserving $\mu:L\to\left[0,+\8\right]$ a \emph{submeasure} if $\mu\left(0_{L}\right)=0$ and $\mu\left(l\vee m\right)\le \mu\left(l\right)+\mu\left(m\right)$, for every $l,m\in L$. Note, that this is equivalent to the fact that $\mu$ is a $\mathcal{L}$-pseudo-norm, for $\mathcal{L}$ being the hull structure of lower sets.\medskip

Now assume that $L$ is a Boolean algebra, and in particular $L$ is distributive. It is easy to see from distributivity that $\mathcal{L}$ is additive with respect to $\vee$. However, $L$ is also a commutative group with respect to $\vartriangle$. Note that $\mu:L\to\left[0,+\8\right]$ is a submeasure iff it is a $\mathcal{L}$-pseudo-norm with respect to $\vartriangle$. Necessity is easy to see, for sufficiency observe that $\mu\left(l\vee m\right)=\mu\left(\left(l\backslash m\right)\vartriangle m\right)\le\mu\left(l\backslash m\right)+\mu\left(m\right)\le \mu\left(l\right)+\mu\left(m\right)$, for any $l,m\in L$. Also, note that for every $l\in L$ we have $l=-l$, hence every submeasure is a symmetric pseudo-norm. The following result characterizes \emph{Fr\'echet-Nikod\'ym topologies}, i.e. a group $\mathcal{L}$-topologies with respect to $\vartriangle$. For more information on the subject see \cite{weber} and the references therein.

\begin{proposition}For a topology $\tau$ on $L$ the following conditions are equivalent:
\item[(i)] $\tau$ is a Fr\'echet-Nikod\'ym topology;
\item[(ii)] $\tau$ is an additive $\mathcal{L}$-topology with respect to $\vee$, and the complementation is a $\tau$-continuous operation;
\item[(iii)] $\tau$ is generated by submeasures.
\end{proposition}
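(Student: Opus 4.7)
My plan is to establish the cycle (i)$\Rightarrow$(iii)$\Rightarrow$(ii)$\Rightarrow$(i).

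For (i)$\Rightarrow$(iii) I would apply Theorem \ref{generated} with $\mathcal{Q}=\mathcal{R}=\mathcal{L}$. Most hypotheses are immediate: $\mathcal{L}$ is $1$-algebraic (unions of lower sets are lower sets), it contains $\{0_L\}$, it is $\mathcal{L}$-enhancible trivially, and it is symmetric vacuously since $-l=l$ in the Boolean group $(L,\vartriangle)$. The only nontrivial verification is that $\mathcal{L}$ is additive with respect to $\vartriangle$, and this is what I expect to be the main obstacle. Given lower sets $A,B\subset L$ and $c\le a\vartriangle b$ with $a\in A$, $b\in B$, I would write $c=(c\wedge a)\vartriangle(c\wedge\neg a)$: the first summand lies in $A$ because $c\wedge a\le a\in A$, and for the second note $c\wedge\neg a\le a\vartriangle b$ is disjoint from $a$, so it is $\le b\setminus a\le b\in B$. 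Hence $c\in A\vartriangle B$, and $\mathcal{L}$ is additive. Theorem \ref{generated}(i) then supplies a generating family of symmetric $[0,1]$-valued $\mathcal{L}$-pseudo-norms with respect to $\vartriangle$, which by the paragraph immediately preceding the proposition are precisely submeasures.

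For (iii)$\Rightarrow$(ii), suppose $\tau$ is generated by submeasures $\{\mu_\alpha\}$. Each subbasic $\tau$-neighborhood of $0_L$ has the form $\{l:\mu_\alpha(l)<\varepsilon\}$, which is a lower set by monotonicity of $\mu_\alpha$, so $\tau$ is an $\mathcal{L}$-topology. Continuity of complementation is immediate from the identity $\neg l\vartriangle\neg m=l\vartriangle m$. For continuity of $\vee$ with respect to $\tau$, I would use the Boolean-algebra inequality $(l\vee m)\vartriangle(l'\vee m')\le(l\vartriangle l')\vee(m\vartriangle m')$, which one verifies by the same style of decomposition as in the previous paragraph, and then conclude by subadditivity and monotonicity of each $\mu_\alpha$.

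For (ii)$\Rightarrow$(i), I would express $\vartriangle$ through operations that are already continuous. Continuity of $\vee$ together with continuity of $\neg$ yields continuity of $\wedge$ via De Morgan's law $l\wedge m=\neg(\neg l\vee\neg m)$, hence also of $l\setminus m=l\wedge\neg m$, and finally of $l\vartriangle m=(l\setminus m)\vee(m\setminus l)$. Since the group inverse is the identity map, $\tau$ is a group topology with respect to $\vartriangle$, and it is an $\mathcal{L}$-topology by hypothesis, so (i) holds. The decomposition argument showing additivity of $\mathcal{L}$ with respect to $\vartriangle$ in the first step is the only substantive point; the remainder is bookkeeping once Theorem \ref{generated} is applicable.
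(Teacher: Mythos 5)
Your proof is correct, but it reaches (iii) by a genuinely different route than the paper. The paper's proof establishes (ii)$\Rightarrow$(i) and (iii)$\Rightarrow$(i) directly, then proves (i)$\Rightarrow$(ii) by hand (continuity of $\vee$ at $0_L$ and then everywhere, via the inequality $\left(a\vartriangle b\right)\vee\left(c\vartriangle d\right)\ge\left(a\vee c\right)\vartriangle\left(b\vee d\right)$), and only then obtains (iii) by working in the \emph{monoid} $\left(L,\vee\right)$: it uses the fact, immediate from distributivity, that $\mathcal{L}$ is additive with respect to $\vee$, and then has to \emph{adapt} the argument of part (i) of Theorem \ref{generated} rather than cite it, since $\left(L,\vee\right)$ is not a group. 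You instead verify that $\mathcal{L}$ is additive with respect to the group operation $\vartriangle$ itself, via the decomposition $c=\left(c\wedge a\right)\vartriangle\left(c\wedge\neg a\right)$ with $c\wedge a\le a$ and $c\wedge\neg a\le b\setminus a\le b$; this computation is correct, and it makes $\mathcal{L}$ a basic symmetric hull structure on the group $\left(L,\vartriangle\right)$, so Theorem \ref{generated}(i) applies verbatim and yields submeasures directly. What your route buys is a clean, citation-only application of the main theorem with no monoid detour; what the paper's route buys is that the additivity verification is trivial (distributivity) and the argument illustrates the monoid-level machinery of Section \ref{monoid}. Your (iii)$\Rightarrow$(ii) and (ii)$\Rightarrow$(i) steps use essentially the same Boolean identities as the paper (the displayed inequality above, with variables renamed, and the expressibility of $\vartriangle$ through $\vee$ and complementation), and your cyclic ordering (i)$\Rightarrow$(iii)$\Rightarrow$(ii)$\Rightarrow$(i) is a valid reorganization of the paper's implication graph.
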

\begin{proof}
(ii)$\Rightarrow$(i) follows from the fact that all Boolean operations can be expressed through the complementation and $\vee$. (iii)$\Rightarrow$(i) follows from the fact that submeasures are precisely symmetric $\mathcal{L}$-pseudo-norms with respect to $\vartriangle$.\medskip

(i)$\Rightarrow$(ii): It is clear that continuity of $\vartriangle$ implies continuity of complementation. Assume that $l_{p}\to 0_{L}$ and $m_{p}\to 0_{L}$. Then, as $\tau$ is an $\mathcal{L}$-topology, by Proposition \ref{1ac} we have $l_{p}\backslash m_{p}\to 0_{L}$. Since $\vartriangle$ is continuous, it follows that $l_{p}\vee m_{p}=\left(l_{p}\backslash m_{p}\right)\vartriangle m_{p}\to 0_{L}$. Thus, $\vee$ is continuous at $0_{L}$.

If $l_{p}\to l$ and $m_{p}\to m$, then due to continuity of $\vartriangle$ we have $l_{p}\vartriangle l\to 0_{L}$ and $m_{p}\vartriangle m\to 0_{L}$, hence $\left(l_{p}\vartriangle l\right)\vee\left(m_{p}\vartriangle m\right)\to 0_{L}$. Using $\left(a\vartriangle b\right)\vee\left(c\vartriangle d\right)\ge \left(a\vee c\right)\vartriangle\left(b\vee d\right)$, and the fact that $\tau$ is an $\mathcal{L}$-topology again we get $\left(l_{p}\vee m_{p}\right)\vartriangle\left(l\vee m\right)\to 0_{L}$, therefore $l_{p}\vee m_{p}\to l\vee m$. Thus, $\vee$ is continuous.\medskip

Now that equivalence of (i) and (ii) is established, we can deduce (iii) from them arguing similarly to part (i) of Theorem \ref{generated}, and using the fact that submeasures are precisely $\mathcal{L}$-pseudo-norms with respect to $\vee$.
\end{proof}

Let $F$ be a vector lattice. The algebraic and order structures of $F$ give rise to a number of hull structures on it. Aside of the previously mentioned balanced, convex and full sets let us mention a few more.\smallskip

\begin{example}\label{solid}
We call $Q\subset F$ \emph{solid} if $f\in Q$, $\left|e\right|\le\left|f\right|$ $\Rightarrow$ $e\in Q$. It is easy to see that the collection $\mathcal{D}$ of solid sets is a $1$-algebraic hull structure and $\mathcal{D}\subset \mathcal{B}$. If $Q\subset F$ is solid, and $r\in\R$, then $rQ$ is solid, hence $\mathcal{D}$ is homothety invariant (and so contraction invariant). Let us show that $\mathcal{D}$ is additive. Assume that $P,Q\subset F$ are solid, and $f\in P$, $g\in Q$, and $\left|e\right|\le\left|f+g\right|$. By Riesz Decomposition Property (see \cite[Theorem 1.10]{ab0}) there are $u,v\in F$ such that $e=u+v$, $\left|u\right|\le\left|f\right|$ (and so $u\in P$) and $\left|v\right|\le\left|g\right|$ (and so $u\in Q$). We conclude that $e=u+v\in P+Q$, and so $P+Q$ is solid. It now follows immediately from Proposition \ref{lenerated} that any \emph{locally solid} topology (i.e. a linear $\mathcal{D}$-topology) on $F$ can be generated by a collection of \emph{solid} pseudo-norms (i.e. $\mathcal{D}$-pseudo-norms; $\rho$ is solid iff $\left|e\right|\le\left|f\right|$ $\Rightarrow$ $\rho\left(\left|e\right|\right)\le\rho\left(\left|f\right|\right)$). Note that $\mathcal{D}$ is not translation invariant: $\left\{0_{F}\right\}$ is solid, but for any $f\ne0_{F}$ the translation with respect to $f$ moves into a non-solid set.
\qed\end{example}\smallskip

\begin{example}
It is easy to see that full sets on $F$ form a translation invariant hull structure. Hence, by Proposition \ref{lenerated} any \emph{locally full} topology can be generated by a collection of \emph{full} balanced pseudo-norms, i.e. balanced pseudo-norms $\rho$ with the property that $e\le f\le g$ $\Rightarrow$ $\rho\left(f\right)\le\rho\left(e\right)\vee \rho\left(g\right)$. Let us show that this hull structure is not necessarily additive. Indeed, let $F:=\R^{2}$, $P:=\left\{\left(0,0\right),\left(-1,2\right)\right\}$ and $Q:=\left\{\left(0,0\right),\left(2,-1\right)\right\}$ are full, but $P+Q=\left\{\left(0,0\right),\left(2,-1\right),\left(-1,2\right),\left(1,1\right)\right\}$ is not. Symmetrization of this example also shows that the hull structure of symmetric full sets is not additive. Note that full sets are also not preserved by balanced hull, take a full set $Q:=\left\{\left(1,2\right),\left(2,1\right)\right\}$, whose balanced hull contains $\left(\frac{1}{2},1\right)$, but not $\left(1,1\right)$.
\qed\end{example}\smallskip

Recall that any topology generates a hull structure of closed sets. Convergences which are not necessarily topological (see \cite{ectv} and the references therein) also generates a hull structure of closed sets.\smallskip

\begin{example}\label{oclosed}
A net $\left(f_{p}\right)_{p\in P}\subset F$ converges \emph{in order} to $f\in F$ (denoted $f_{p}\xrightarrow[]{\mathrm{o}}f$) if there is $G\subset F$ with $\bigwedge G=0_{F}$ such that for each $g\in G$ there is $p_{0}$ such that $\left|f_{p}-f\right|\le g$, for every $p\ge p_{0}$. We call $Q\subset F$ \emph{order closed} if it contains all existing order limits of nets in $Q$. It is easy to see that translation maps and homotheties preserve order convergence, and so preserve order closed sets. Hence, the collection $\mathcal{C}$ of order closed sets is a translation invariant homothety invariant hull structure.
\qed\end{example}\smallskip

Another class of examples comes from the locally convex case. In particular the convex hull of a solid set is solid (see \cite[Theorem 1.11]{ab0}), and so according to Proposition \ref{convex}, every locally convex locally solid topology on $F$ can be generated by a collection of solid semi-norms. Consider another example.\smallskip

\begin{example}
We call $Q\subset F$ an \emph{$M$-set} if it is solid and $Q_{+}$ is an ideal in a lattice $F_{+}$, i.e. if $e,f\in Q_{+}$ $\Rightarrow$ $e\vee f\in Q$. Note that every $M$-set is convex. Indeed, if $e,f\in Q$ and $t,s\ge 0$ are such that $t+s=1$, then $\left|te+sf\right|\le t\left|e\right|+s\left|f\right|\le \left|e\right|\vee\left|f\right|\in Q$, hence $te+sf\in Q$. It is also easy to see that the class $\mathcal{M}$ of $M$-sets is homothety invariant, and so according to Proposition \ref{convex} any linear $\mathcal{M}$-topology can be generated by a collection of $\mathcal{M}$-semi-norms, i.e. solid semi-norms $\rho$ with the property that $\rho\left(e\vee f\right)=\rho\left(e\right)\vee\rho\left(f\right)$, for any $e,f\in F_{+}$.
\qed\end{example}\smallskip

We can also consider intersections of some of the hull structures to get new ones. In particular, a \emph{Fatou} set is an order closed solid set. The hull structure $\mathcal{F}=\mathcal{C}\cap \mathcal{D}$ of Fatou sets is neither translation-invariant nor additive: clearly, translation can ruin solidness; to see non-additivity observe that if $H\subset F$ is a band (hence a Fatou set; see \cite[Section 1.2]{ab0} for more details) but not a projection band, then $H^{d}$ is also a band, but $H+H^{d}$ is a non-order closed ideal. In order to deal with this hull structure we need the full power of Proposition \ref{lenerated}. Let us first show that $\mathcal{C}$ and $\mathcal{D}$ qualify.

\begin{example}\label{fatou}
We claim that the hull structure $\mathcal{C}$ of all order closed sets is $\mathcal{D}$-enhancible, where $\mathcal{D}$ is a basic hull structure of solid sets. Let $Q\subset F$ be order closed. We have $Q_{\mathcal{D}}=\left\{f\in F,~\left[-\left|f\right|,\left|f\right|\right]\subset Q\right\}$. Let us first show that $\left(Q_{\mathcal{D}}\right)_{+}\ni f_{p}\xrightarrow[]{\mathrm{o}} f$ $\Rightarrow$ $f\in Q_{\mathcal{D}}$, i.e. $\left[-f,f\right]\subset Q$. Let $g\in\left[-f,f\right]$. For every $p$ we have that $Q\supset \left[-f_{p},f_{p}\right]\ni -f_{p}\vee g\wedge f_{p}$. As $Q$ is order closed, and lattice operations are continuous with respect to order convergence, $Q\ni -f_{p}\vee g\wedge f_{p}\xrightarrow[]{\mathrm{o}} -f\vee g\wedge f= g$ implies $g\in Q$. Therefore, $\left[-f,f\right]\subset Q$, as required. Next, if $Q_{\mathcal{D}}\ni f_{p}\xrightarrow[]{\mathrm{o}} f$, then $\left(Q_{\mathcal{D}}\right)_{+}\ni \left|f_{p}\right|\xrightarrow[]{\mathrm{o}} \left|f\right|$, hence $\left|f\right|\in Q_{\mathcal{D}}$, and thus $f\in Q_{\mathcal{D}}$.

It now follows from Proposition \ref{lenerated} that any Fatou topology (i.e. linear $\mathcal{F}$-topology) is generated by a collection of Fatou pseudo-norms (i.e. $\mathcal{F}$-pseudo-norms; a pseudo-norm $\rho$ is Fatou if it is solid and $0_{F}\le f_{p}\uparrow f$ $\Rightarrow$ $\rho\left(f\right)\le\bigvee\limits_{p\in P}\rho\left(f_{p}\right)$).\medskip

Note that another way to prove this (which requires a simpler version of Proposition \ref{lenerated} with just one hull structure) is to first note that the hull structure of order closed full sets is translation invariant and homothety invariant, and moreover every locally solid topology $\tau$ is locally full (see \cite[Exercise 1, Section 2]{ab0}). Hence, if $W\in\tau_{0}\cap\mathcal{F}$, Proposition \ref{lenerated} guarantees that there is a continuous balanced $\left[0,1\right]$-valued $\tau$-continuous full pseudo-norm $\rho$ such that $\rho^{-1}\left[0,1\right)\subset W$. Define $\left|\rho\right|:F\to\left[0,1\right]$ by $\left|\rho\right|\left(f\right):=\rho\left(\left|f\right|\right)$.
Assume that $\left|e\right|\le \left|f\right|$; we also have $\left|e\right|\ge 0_{F}$, and so fullness of $\rho$ yields $\left|\rho\right|\left(e\right)=\rho\left(\left|e\right|\right)\le \rho\left(\left|f\right|\right)\vee\rho\left(0_{F}\right) =\left|\rho\right|\left(f\right)$. If particular, for $f,g\in F$ we have $$\left|\rho\right|\left(f+g\right)\le \left|\rho\right|\left(\left|f\right|+\left|g\right|\right)=\rho\left(\left|f\right|+\left|g\right|\right)\le \rho\left(\left|f\right|\right)+\rho\left(\left|g\right|\right)=\left|\rho\right|\left(f\right)+\left|\rho\right|\left(g\right).$$ Thus, $\rho$ is a solid pseudo-norm. To show that it is Fatou note that it is a composition of a $\mathcal{Q}$-pseudo-norm $\rho$ and an order continuous map $\left|\cdot\right|$. Finally, $\left|\rho\right|\left(f\right)<1$ $\Rightarrow$ $\left|f\right|\in W$ $\Rightarrow$ $f\in W$, and so $\left|\rho\right|\left[0,1\right)\subset W$. As $W$ was arbitrary, the claim follows.
\qed\end{example}\bigskip

\textbf{Acknowledgements.} The author thanks Jochen Wengenroth for bringing the author's attention to the book \cite{aek} and the service \href{mathoverflow.com/}{MathOverflow} which made it possible. The author was supported by Pacific Institute for the Mathematical Sciences.

\begin{bibsection}
\begin{biblist}

\bib{aek}{book}{
   author={Adasch, Norbert},
   author={Ernst, Bruno},
   author={Keim, Dieter},
   title={Topological vector spaces. The theory without convexity conditions},
   series={Lecture Notes in Mathematics},
   volume={Vol. 639},
   publisher={Springer-Verlag, Berlin-New York},
   date={1978},
   pages={i+125},
}

\bib{ab0}{book}{
   author={Aliprantis, Charalambos D.},
   author={Burkinshaw, Owen},
   title={Locally solid Riesz spaces with applications to economics},
   series={Mathematical Surveys and Monographs},
   volume={105},
   edition={2},
   publisher={American Mathematical Society, Providence, RI},
   date={2003},
   pages={xii+344},
}

\bib{bn}{book}{
   author={Beckenstein, Edward},
   author={Narici, Lawrence},
   title={Topological vector spaces},
   series={Pure and Applied Mathematics (Boca Raton)},
   volume={296},
   edition={2},
   publisher={CRC Press, Boca Raton, FL},
   date={2011},
   pages={xviii+610},
}

\bib{erz}{article}{
   author={Bilokopytov, Eugene},
   title={Locally solid convergences and order continuity of positive
   operators},
   journal={J. Math. Anal. Appl.},
   volume={528},
   date={2023},
   number={1},
   pages={Paper No. 127566, 23},
}

\bib{erzh}{article}{
   author={Bilokopytov, Eugene},
   title={Locally hulled convergences},
   pages={in preparation},
}

\bib{ectv}{article}{
   author={Bilokopytov, E.},
   author={Conradie, J.},
   author={Troitsky, V.G.},
   author={van der Walt, J.H.},
   title={Locally solid convergence structures},
   journal={\href{http://arxiv.org/abs/2404.15641}{arXiv:2404.15641}},
   date={2024},
}

\bib{dp}{book}{
   author={Davey, B. A.},
   author={Priestley, H. A.},
   title={Introduction to lattices and order},
   edition={2},
   publisher={Cambridge University Press, New York},
   date={2002},
   pages={xii+298},
}

\bib{ddje}{article}{
   author={Deng, Yang},
   author={de Jeu, Marcel},
   title={Embedded unbounded order convergent sequences in topologically
   convergent nets in vector lattices},
   journal={Banach J. Math. Anal.},
   volume={18},
   date={2024},
   number={2},
   pages={Paper No. 22, 25},
}

\bib{erne}{article}{
   author={Ern\'e, Marcel},
   title={Closure},
   conference={
      title={Beyond topology},
   },
   book={
      series={Contemp. Math.},
      volume={486},
      publisher={Amer. Math. Soc., Providence, RI},
   },
   date={2009},
   pages={163--238},
}

\bib{fremlin}{book}{
   author={Fremlin, D. H.},
   title={Topological Riesz spaces and measure theory},
   publisher={Cambridge University Press, London-New York},
   date={1974},
   pages={xiv+266},
}

\bib{schechter}{book}{
   author={Schechter, Eric},
   title={Handbook of analysis and its foundations},
   publisher={Academic Press, Inc., San Diego, CA},
   date={1997},
   pages={xxii+883},
}

\bib{taylor}{article}{
   author={Taylor, M. A.},
   title={Completeness of unbounded convergences},
   journal={Proc. Amer. Math. Soc.},
   volume={146},
   date={2018},
   number={8},
   pages={3413--3423},
}

\bib{wael}{book}{
   author={Waelbroeck, Lucien},
   title={Topological vector spaces and algebras},
   series={Lecture Notes in Mathematics},
   volume={Vol. 230},
   publisher={Springer-Verlag, Berlin-New York},
   date={1971},
   pages={vii+158},
}

\bib{weber}{article}{
   author={Weber, Hans},
   title={FN-topologies and group-valued measures},
   conference={
      title={Handbook of measure theory, Vol. I, II},
   },
   book={
      publisher={North-Holland, Amsterdam},
   },
   date={2002},
   pages={703--743},
}

\end{biblist}
\end{bibsection}

\end{document}